\numberwithin{equation}{section}
\numberwithin{figure}{section}
\theoremstyle{plain}
\newtheorem*{problem*}{Problem}
\newtheorem{thm}{Theorem}[section]
  \theoremstyle{plain}
  \newtheorem{conjecture}[thm]{Conjecture}
  \theoremstyle{plain}
  \newtheorem{lem}[thm]{Lemma}
  \theoremstyle{definition}
  \newtheorem{prop}[thm]{Proposition}
  \theoremstyle{plain}
  \newtheorem{defn}[thm]{Definition}
  \theoremstyle{plain}
  \newtheorem*{thm*}{Theorem}
\title{Judicious partitions of directed graphs}
\author{
Choongbum Lee
\thanks{
Department of Mathematics, MIT, Cambridge, MA 02139. E-mail: cb\_lee@math.mit.edu.
}
\and
Po-Shen Loh
\thanks{
Department of Mathematical Sciences, Carnegie Mellon University, Pittsburgh, PA 15213. E-mail: ploh@cmu.edu.
Research supported by NSF grant DMS-1201380, an NSA Young Investigators Grant and a USA-Israel BSF Grant.}
\and
Benny Sudakov
\thanks{
Department of Mathematics, UCLA, Los Angeles, CA 90095. Email: bsudakov@math.ucla.edu. Research supported
in part by NSF grant DMS-1101185, by AFOSR MURI grant FA9550-10-1-0569 and by a USA-Israel BSF grant.}
}
\date{}
\newcommand{\rt}[1]{\overrightarrow{#1}}
\begin{document}

\global\long\def\E#1{\mathbb{E}[#1]}
\global\long\def\Var#1{\textrm{Var}\left[#1\right]}
\global\long\def\Cov#1{\textrm{Cov}[#1]}
\global\long\def\P#1{\mathbf{P}(#1)}
\global\long\def\pr#1{\mathbf{P}\left(#1\right)}
\global\long\def\MAXCUT{\textrm{Max Cut}}

\maketitle

\begin{abstract}
  The area of \emph{judicious partitioning}\/ considers the general family
  of partitioning problems in which one seeks to optimize several
  parameters simultaneously, and these problems have been widely studied in
  various combinatorial contexts.  In this paper, we study essentially the
  most fundamental judicious partitioning problem for directed graphs,
  which naturally extends the classical Max Cut problem to this setting: we
  seek bipartitions in which many edges cross in each direction.  It is
  easy to see that a minimum outdegree condition is required in order for
  the problem to be nontrivial, and we prove that every directed graph with
  $m$ edges and minimum outdegree at least two admits a bipartition in
  which at least $(\frac{1}{6}+o(1))m$ edges cross in each direction.  We
  also prove that if the minimum outdegree is at least three, then the
  constant can be increased to $\frac{1}{5}$. If the minimum outdegree
  tends to infinity with $n$, then the constant increases to $\frac{1}{4}$.
  All of these constants are best-possible, and provide asymptotic answers
  to a question of Alex Scott.
\end{abstract}

\maketitle

\section{Introduction\label{sec:Introduction}}

Partitioning problems have a long history in mathematics and theoretical
computer science. One famous example is Max Cut, which seeks a bipartition
of a given graph which maximizes the number of edges which cross between
the two sides.  This is a fundamental problem, and has been the subject of
much investigation (see, e.g., \cite{FrJe97,GoWi95,Hastad01,TSSW00} and
their references). Computing the exact solution can be quite difficult,
since the Max Cut problem is known to be NP-complete.  Still, it is
possible to obtain some estimates on the size of the Max Cut in terms of
the number of edges of the graph. A folklore bound (which comes from a
simple and efficient algorithm) asserts that every graph with $m$ edges
has 

$$
\MAXCUT\ge\frac{m}{2}.
$$
This immediately gives a 0.5-approximation algorithm, because no cut can
have size greater than the total number of edges $m$. The current best
known approximation ratio of 0.87856 is given by the celebrated algorithm of
Goemans and Williamson \cite{GoWi95}, which is based on an ingenious
application of semi-definite programming.  From a purely combinatorial
perspective, it is of interest to determine best-possible bounds for
parameters of optimal partitions.  Edwards \cite{Edwards73} improved on the
folklore bound and proved that
\[
  \MAXCUT
  \ge
  \left\lceil 
  \frac{m}{2}+\sqrt{\frac{m}{8}+\frac{1}{64}}-\frac{1}{8}
  \right\rceil,
\]
which is tight, e.g., for complete graphs.

Empowered by the growth of probabilistic techniques, a new class of
\emph{judicious}\/ partitioning results has emerged. In these problems, one
simultaneously optimizes several properties, in contrast to the classical
problems such as Max Cut where one attempts to optimize a single parameter.
A classic result in this area is a theorem of Bollob\'as and Scott
\cite{BoSc99} which asserts that every $m$-edge graph has a bipartition
$V=V_{1}\cup V_{2}$ of its vertex set in which 
\[
  e(V_{1},V_{2})
  \ge
  \left\lceil 
  \frac{m}{2}+\sqrt{\frac{m}{8}+\frac{1}{64}}-\frac{1}{8}
  \right\rceil
\]
and 
\[
\max\{e(V_{1}),e(V_{2})\}\le\frac{m}{4}+\sqrt{\frac{m}{32}+\frac{1}{256}}-\frac{1}{16}.
\]
Note that their result simultaneously optimizes three parameters: the
number of edges across the partition (matching the Edwards bound), and the
number of edges inside each $V_{i}$. We direct the interested reader to any
of \cite{ABKS, AKS, BoReTh93,BoSc00,BoSc10,Ha11,KuOs07,LeLoSu,MaYaYu10,MaYu11}
(by no means a comprehensive list), or to either of the surveys
\cite{BoSc02a,Scott06} for more background on the judicious partitioning
literature.

In this paper we study essentially the most fundamental judicious
partitioning problem for directed graphs.  A directed graph is a pair $(V,
E)$ where $V$ is a set of vertices, and $E$ is a set of distinct edges
$\rt{uv}$, where $u \neq v$.  We disallow loops and multiple edges, but do
allow both $\rt{uv}$ and $\rt{vu}$ to be present.  In this context, any cut
$V = V_1 \cup V_2$ is most naturally associated with two parameters: the
number of edges from $V_1$ to $V_2$, and the number of edges from $V_2$ to
$V_1$.  Thus, in contrast to undirected graphs, where Max Cut only
needs to optimize a single parameter (the total number of crossing edges),
in directed graphs one can measure the size of a cut in each direction.
Therefore, in the judicious analogue of the Max Cut problem for directed
graphs, one seeks a bipartition which has many edges crossing in both
directions.

Although it is easy to guarantee a partition with at least $1/4$ of the
edges in a single direction,
one immediately notices that the problem as stated above has the following issue. 
If the digraph is a star with all edges oriented
from a central vertex, then regardless of the bipartition,
one direction would always have zero edges.  This is similar to the issue which
arose in the judicious bisection problem in graphs (see \cite{LeLoSu}), and
in both cases, it can be resolved by imposing a minimum-degree condition.
The following natural question appears in the survey of Scott \cite{Scott06}. 

\begin{problem*}
  Let $d$ be a positive integer. What is the maximum constant $c_{d}$ such
  that every $m$-edge directed graph of minimum outdegree at least $d$
  admits a bipartition $V=V_{1}\cup V_{2}$ of its vertex set in which 
  \[
    \min\{e(V_{1},V_{2}),e(V_{2},V_{1})\}\ge c_{d}\cdot m?
    \]
\end{problem*}

For $d=1$, consider the graph $K_{1,n-1}$ and add a single edge inside the
part of size $n-1$. This graph can be oriented so that the minimum
outdegree is 1 and $\min\{e(V_{1},V_{2}),e(V_{2},V_{1})\}\le1$ for every
partition $V=V_{1}\cup V_{2}$. This is because we have a cyclically
oriented triangle, with lots of edges all pointing in to one of the
vertices of that triangle.  Then all of those edges will only contribute to
$e(V_1, V_2)$ or $e(V_2, V_1)$, depending on whether the apex is in $V_2$
or $V_1$, respectively.  Altogether, the other edges will only contribute a
total of at most one edge back in the other direction.  Hence we see that
$c_{1}=0$.

For $d\ge2$, first orient the edges of the complete graph $K_{2d-1}$ along
an Eulerian circuit.  In this way we obtain a directed graph with $2d-1$
vertices, and all outdegrees equal to $d-1$.  Moreover, in every
bipartition of its vertex set, the number of edges crossing in each
direction is exactly the same (this is easily seen by following the
Eulerian circuit).  Hence in every bipartition of its vertex set, the
maximum number of edges in any direction is at most $\frac{d(d-1)}{2}$. Now
consider the directed graph where we take $k$ vertex disjoint copies of
$K_{2d-1}$ oriented as above, and a single vertex disjoint copy of
$K_{2d+1}$ oriented in a similar manner. Fix a vertex $v_{0}$ of
$K_{2d+1}$, and add edges so that all the vertices belonging to the copies
of $K_{2d-1}$ are in-neighbors of $v_{0}$. This graph has minimum
outdegree $d$, and its number of edges is 
\begin{align*}
m &= k(d-1)(2d-1)+d(2d+1)+k(2d-1)  \\
  &= kd(2d-1) + d(2d+1).
\end{align*}
Moreover, for every partition $V=V_{1}\cup V_{2}$ of its vertex set
with $v_{0}\in V_{1}$, we have 
\[
e(V_{1,}V_{2})\le k\frac{d(d-1)}{2}+\frac{d(d+1)}{2}=\frac{d-1}{2(2d-1)}m+\frac{d^{2}}{2d-1}.
\]
Hence this graph shows that $c_{d}\le\frac{d-1}{2(2d-1)}$. Our main
theorem asserts that for $d=2,3$ this bound is asymptotically best
possible.
\begin{thm}
\label{thm:main}For $d=2,3$, every directed graph of minimum outdegree
at least $d$ admits a bipartition $V=V_{1}\cup V_{2}$ of its
vertex set for which 
\[
\min\{e(V_{1},V_{2}),e(V_{2},V_{1})\}\ge\left(\frac{d-1}{2(2d-1)}+o(1)\right)m.
\]
Thus $c_{2}=\frac{1}{6} + o(1)$ and $c_{3}=\frac{1}{5} + o(1)$.
\end{thm}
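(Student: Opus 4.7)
The plan is to combine a random bipartition with careful handling of high-degree vertices, based on two key observations. (i) For a uniformly random bipartition $V = V_1 \cup V_2$, $\E{e(V_1,V_2)} = \E{e(V_2,V_1)} = m/4$, comfortably exceeding the target $\frac{d-1}{2(2d-1)} m$ ($= m/6$ for $d=2$, $= m/5$ for $d=3$). (ii) Swapping a vertex $v$ between the two sides changes $e(V_1,V_2) - e(V_2,V_1)$ by exactly $\pm(d^-(v) - d^+(v))$, which quantifies each vertex's potential contribution to the directional imbalance. The entire problem therefore reduces to concentrating the two random variables around their common mean $m/4$.

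We split $V$ into \emph{heavy} vertices $H$ (total degree exceeding some threshold $\Delta$, say $\Delta = m^{2/3}$) and \emph{light} vertices $L$. Degree counting gives $|H| \le 2m/\Delta$, so heavy--heavy edges number $O(|H|^2) = o(m)$ and can be neglected. Heavy vertices are placed deterministically via a greedy procedure: process them in some order and, at each step, assign the current heavy to the side that keeps $\sum_{h \in V_1}(d^-(h) - d^+(h)) - \sum_{h \in V_2}(d^-(h) - d^+(h))$ closest to zero. Light vertices are then assigned uniformly at random and independently; by Chebyshev (using that each of $e(V_1,V_2), e(V_2,V_1)$ has variance $O(m\Delta) = o(m^2)$ over the light randomness), both quantities stay within $o(m)$ of their conditional expectations with positive probability. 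When all vertices have degree $o(m)$, the greedy step leaves a residual imbalance of $o(m)$ (since $|d^-(h) - d^+(h)| \le d(h) = o(m)$ for every $h$), yielding $\min\{e(V_1,V_2), e(V_2,V_1)\} \ge m/4 - o(m)$, much stronger than the stated bound.

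The main obstacle is \emph{super-heavy} vertices of degree $\Theta(m)$: for such $v$, $|d^-(v) - d^+(v)|$ can be $\Theta(m)$, so the greedy step cannot drive the imbalance below $o(m)$. The extremal construction features precisely such a hub $v_0$ with $d^-(v_0) = \Theta(m)$ and $d^+(v_0) = O(1)$, whose placement forces a $\Theta(m)$-imbalance no matter which side it goes on. The minimum outdegree condition is crucial here: after fixing $v_0$'s side (which contributes $\approx d^-(v_0)/2$ to one direction and essentially nothing to the other), the residual graph on $V \setminus \{v_0\}$ still has minimum outdegree at least $d$ on almost all vertices, and one must show that a clever bipartition of this residual generates enough edges in the hub-deficient direction to meet the target. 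The tight constant $\frac{d-1}{2(2d-1)}$ arises from balancing this forced imbalance against the Eulerian-like contributions of the rest (in the extremal example, the $K_{2d-1}$ copies oriented along an Eulerian circuit split symmetrically across any bipartition). Carrying out this balance requires a delicate case analysis of super-heavy configurations; the cases $d=2$ and $d=3$ admit a tractable such analysis, whereas the analogous argument for $d \ge 4$ is significantly more involved and is not addressed by this theorem.
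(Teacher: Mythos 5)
Your plan correctly handles the easy regime and correctly identifies where the difficulty lies, but it does not contain the idea that actually resolves that difficulty, so there is a genuine gap. Concretely: once you isolate the heavy vertices and greedily balance their surpluses, the residual imbalance $\Theta$ of the heavy partition can be forced to be $\Theta(m)$ (a single hub with $d^-(v_0)-d^+(v_0)=\Theta(m)$ cannot be balanced by anything), and then the random placement of the light vertices only yields $\min\{e(V_1,V_2),e(V_2,V_1)\}\ge \frac{m-\Theta}{4}-o(m)$, which drops below $\frac{m}{6}$ as soon as $\Theta>\frac{m}{3}$. At that point you write that ``one must show that a clever bipartition of this residual generates enough edges in the hub-deficient direction'' via ``a delicate case analysis'' --- but this is a restatement of the problem, not an argument. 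The paper's mechanism for closing exactly this gap is an Edwards-type bisection theorem for directed graphs (Theorem \ref{thm:random_bisection}): by decomposing $D[B]$ into induced stars plus a small leftover set and flipping each star as a unit, one gains an additive $\frac{n-\tau}{8}$ over the purely random bound, where $\tau$ is the number of odd components of $D[B]$. The minimum outdegree condition is then used quantitatively to bound $\tau$: light vertices must send their out-edges somewhere, at most one each can go to the hub, and only $\Delta-\Theta+b$ edges go to $A$ in the deficient direction, so $D[B]$ has few tiny odd components. The resulting inequality $\tau\le\frac{n+2(\Delta-\Theta+b)}{3}$ (for $d=2$) is precisely what makes $\frac{n-\tau}{8}$ large enough to compensate for $\frac{\Theta}{4}-\frac{m}{12}$. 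None of this appears in your proposal, and without some substitute for it the bound you can certify in the hard case is only $\frac{m-\Theta}{4}$, which is insufficient.

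For $d=3$ the missing content is even larger: the paper must (i) refine the odd-component count to exclude $3$-vertex tight components containing antiparallel pairs (Lemma \ref{lem:3;random_bisection} and Lemma \ref{lem:num_odd_component}), (ii) prove structurally that the number of ``huge'' vertices is one or three (Lemmas \ref{lem:large_structure} and \ref{lem:huge}), and (iii) in the three-huge-vertex case abandon the gap-minimizing partition of the heavy set entirely and instead use a deliberately sub-optimal partition combined with an \emph{unbalanced} random partition of $B$ with $p\in\{\frac{2}{5},\frac{3}{5}\}$. Your plan gestures at a ``case analysis of super-heavy configurations'' but supplies neither the cases nor the tools; in particular the need to sometimes use a non-optimal heavy partition together with a biased coin is a non-obvious idea that the concluding remarks of the paper show cannot be avoided. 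So while your framing of the problem (random baseline of $m/4$, surplus $d^-(v)-d^+(v)$ as the obstruction, degree threshold, greedy balancing, Chebyshev concentration) matches the paper's opening moves, the proof of the theorem is essentially entirely in the part you have left unproved.
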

Based on the constructions above and Theorem \ref{thm:main} we make
the following conjecture.
\begin{conjecture}
  \label{conj:main}
  Let $d$ be an integer satisfying $d\ge4$. Every directed graph of minimum
  outdegree at least $d$ admits a bipartition $V=V_{1}\cup V_{2}$ of its
  vertex set for which 
  \[
    \min\{e(V_{1},V_{2}),e(V_{2},V_{1})\}
    \ge
    \left(\frac{d-1}{2(2d-1)}+o(1)\right)m.
    \]
\end{conjecture}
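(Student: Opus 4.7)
The plan is to extend the random-partition-plus-correction framework underlying Theorem~\ref{thm:main} to the range $d\ge 4$. The starting point is the identity
\[
e(V_1,V_2)-e(V_2,V_1) \;=\; \sum_{v\in V_1}\bigl(d^+(v)-d^-(v)\bigr),
\]
which, combined with Edwards' theorem on the underlying undirected graph, decouples the conjecture into (i) controlling the total cut $e(V_1,V_2)+e(V_2,V_1)$, which is already $\ge m/2+\Omega(\sqrt{m})$ by Edwards, and (ii) controlling the indicator of $V_1$ through its inner product with the in/out-degree imbalance sequence $(d^+(v)-d^-(v))_{v\in V}$.

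The first step of the plan is to handle vertices of very large total degree. Fix a small $\epsilon>0$ and let $H=\{v\in V\colon d^+(v)+d^-(v)\ge\epsilon m\}$; by degree counting, $|H|\le 4/\epsilon$. For each of the $2^{|H|}$ placements $\sigma\colon H\to\{V_1,V_2\}$, I would assign the vertices of $V\setminus H$ uniformly at random to $V_1$ or $V_2$. Since every vertex in $V\setminus H$ has total degree at most $\epsilon m$, Azuma's inequality on the vertex-exposure martingale yields that both $e(V_1,V_2)$ and $e(V_2,V_1)$ concentrate within $O(\sqrt{\epsilon}\cdot m)$ of their conditional expectations $f_1(\sigma)$ and $f_2(\sigma)$.

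The main obstacle is to show that some placement $\sigma$ together with a suitable bipartition of $V\setminus H$ (possibly the random one, possibly one obtained by local improvement from it) yields $\min\{e(V_1,V_2),e(V_2,V_1)\}\ge\bigl(\tfrac{d-1}{2(2d-1)}+o(1)\bigr)m$. A direct computation shows that the uniform random assignment is insufficient on its own: on the extremal construction, the sink vertex $v_0$ makes the distribution of the two directed cuts effectively bimodal in $v_0$'s side, and the resulting value of $\min\{e(V_1,V_2),e(V_2,V_1)\}$ is only a $\tfrac{2d-1}{2d}$-fraction of the target. Thus a genuinely structured bipartition of $V\setminus H$ is needed.

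My best guess is to replace the random assignment by a near-Max-Cut bipartition of the underlying undirected graph on $V\setminus H$, and then perform a carefully controlled sequence of vertex flips to slide $\sum_{v\in V_1}(d^+(v)-d^-(v))$ into the narrow window dictated by the target, without significantly reducing $e(V_1,V_2)+e(V_2,V_1)$. Analyzing the local optimum of such an improvement procedure amounts to proving a defect version of Edwards' inequality that simultaneously respects the in/out-degree imbalance sequence. This is the step I expect to be hardest: for $d=2,3$, the restricted local configurations of out-neighborhoods likely permit a direct case analysis, but for $d\ge 4$ a genuinely new structural lemma seems necessary to simultaneously force $e(V_1,V_2)+e(V_2,V_1)$ to match the Edwards-type bound and $|e(V_1,V_2)-e(V_2,V_1)|$ to stay below $\tfrac{m}{2d-1}+o(m)$.
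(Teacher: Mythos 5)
The statement you are addressing is Conjecture~\ref{conj:main}, which the paper leaves open: there is no proof of it anywhere in the text, and Section~\ref{sec:concluding} is devoted to explaining why the authors' own machinery breaks down for $d\ge 4$. Your submission is likewise not a proof. You set up a framework and then, at the decisive step, write that ``a genuinely new structural lemma seems necessary''; that missing lemma \emph{is} the conjecture, so nothing has been established beyond what the paper already knows.

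Beyond the acknowledged gap, the quantitative skeleton you propose is already known to be too weak. Writing $S=e(V_1,V_2)+e(V_2,V_1)$ and $D=e(V_1,V_2)-e(V_2,V_1)$, your plan needs $S-|D|\ge \frac{d-1}{2d-1}m$, i.e.\ with $S= \frac{m}{2}$ one needs $|D|\le\frac{m}{2(2d-1)}$ (note your stated window $\frac{m}{2d-1}$ is off by a factor of $2$). Edwards' gain of $\Omega(\sqrt{m})$ over $\frac{m}{2}$ is useless here: the hard regime is $m=\Theta(n)$ (for $m\ge Cn$ the second-moment method, essentially your Azuma step, already finishes), and there $\sqrt{m}=o(m)$. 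In the extremal constructions the vertex $v_0$ forces $|D|$ to exceed the admissible window for \emph{every} bipartition, so one must instead gain a term linear in $n$ over $\frac{m}{2}$ in the total cut; this is exactly the role of Theorem~\ref{thm:random_bisection} with its $\frac{n-\tau}{8}$ term, controlled via the odd/tight components of $D[B]$. Your proposal contains no mechanism for producing such a linear gain, nor for the second known obstruction: the $K_{5,n-5}$ example in Section~\ref{sec:concluding} shows that for $d\ge 4$ one sometimes must start from a partition of the high-degree vertices that does \emph{not} minimize the imbalance, so ``slide $\sum_{v\in V_1}(d^+(v)-d^-(v))$ into the narrow window'' is not even the right objective in general. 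A correct proof would have to combine the bisection-type gain and the unbalanced random partition of Lemma~\ref{lem:partition_secondmoment} in a way the paper explicitly states it does not know how to do.
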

This paper is organized as follows. In Sections \ref{sec:second_moment} and
\ref{sec:bisection}, we prove the core results which drive the proof of our
main theorem.  We prove the $d=2$ case of the main theorem in Section
\ref{sec:min-2}, and the $d=3$ case in Section \ref{sec:min-3}.  The final
section contains some concluding remarks, with a discussion of the
obstacles that remain in the cases $d\ge4$.

\medskip

\noindent \textbf{Notation.} Graphs $G=(V,E)$ and directed graphs
$D=(V,E)$ are given by pairs of vertex sets and edge sets.  All of our
objects will have no loops (endpoints of edges are distinct), and no
multiple edges (edges are all distinct), although directed graphs are
permitted to have antiparallel pairs $\rt{uv}$, $\rt{vu}$.  A directed
graph is connected if the underlying undirected graph is connected. For an
undirected graph $G=(V,E)$ and two vertex subsets $X$ and $Y$, we let
$e(X,Y)=|\{xy\,:\, x\in X,y\in Y,\,xy\in E\}|$.  For a directed graph
$D=(V,E)$ and a vertex $v$, let $d^{-}(v)$ and $d^{+}(v)$ be the number of
$v$'s in-neighbors and out neighbors, respectively, and let
$d(v)=d^{-}(v)+d^{+}(v)$ be the total degree of $v$. Note that $d(v)$ can
potentially be as high as $2(n-1)$ because edges in both directions are
permitted between each pair. 
For two vertex subsets $X$ and $Y$ in a digraph,
let $e(X,Y)=|\{\rt{xy}\,:\, x\in X,y\in Y,\rt{xy}\in E\}|$. 
Let $e(X) = e(X, X)$. For a vertex
set $A$, we let $D[A]$ denote the induced subgraph of $D$ on $A$.  Since
the majority of our results are asymptotic in nature, we will implicitly
ignore rounding effects whenever these effects are of smaller order than
our error terms. For two functions $f(n)$ and $g(n)$, we write
$f(n)=o(g(n))$ if $\lim_{n\rightarrow\infty}f(n)/g(n)=0$.  We often use
subscripts such as $\varepsilon_{3.1}$ to indicate that $\varepsilon$ is
the constant coming from Theorem/Corollary/Lemma 3.1.

\section{Basic probabilistic approach}
\label{sec:second_moment}

A simple, yet powerful, method of obtaining an effective partition is to
apply randomness, by independently placing each vertex to each side with
some specified probability. Even though this method is not powerful enough
to immediately solve our main problem, it serves as a useful starting
point, and in fact provides a sufficiently good partition for some range of
the parameter space.  In this section, we develop this idea in a slightly
more general form, keeping in mind later applications.  The following lemma
estimates the number of edges across a random partition using the first and
second moment methods.

\begin{lem}
  \label{lem:secondmoment}
  Let $D=(V,E)$ be a directed graph with $m$ edges. Let $0 \leq p \leq 1$
  be a real number. Suppose that we are also given a subset $A\subset V$,
  with partition $A=A_{1}\cup A_{2}$. Let $B=V\setminus A$ and consider a
  random bipartition $B=B_{1}\cup B_{2}$ obtained by independently placing
  each vertex of $B$ in $B_{1}$ with probability $p$, and in $B_{2}$ with
  probability $1-p$. Let $V_{1}=A_{1}\cup B_{1}$ and $V_{2}=A_{2}\cup
  B_{2}$.  Then
  \begin{align*}
    \E{e(V_{1},V_{2})} & =e(A_{1},A_{2})+(1-p)\cdot e(A_{1}, B)+p\cdot e(B, A_{2})+p(1-p)\cdot e(B)\quad\textrm{and}\\
    \Var{e(V_{1},V_{2})} & <2m\cdot\max_{v\in B}d(v).
  \end{align*}
\end{lem}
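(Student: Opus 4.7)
The plan is to prove both claims by writing $e(V_{1},V_{2}) = \sum_{\rt{uv} \in E} X_{\rt{uv}}$, where $X_{\rt{uv}}$ is the indicator of the event $\{u \in V_{1}\text{ and } v \in V_{2}\}$, and then applying linearity of expectation for the first identity and a dependency-based covariance expansion for the variance bound.

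For the expectation, I classify each edge $\rt{uv} \in E$ according to where its endpoints lie in the fixed partition $V = A \cup B$. An edge with $u \in A_1, v \in A_2$ contributes deterministically; one with $u \in A_1, v \in B$ contributes with probability $1-p$ (the probability that $v$ lands in $B_2$); one with $u \in B, v \in A_2$ contributes with probability $p$; and one with both endpoints in $B$ contributes with probability $p(1-p)$, using independence of the random placements at $u$ and at $v$. All remaining types ($u \in A_2$, or $v \in A_1$ with $u \notin A_1$) have $X_{\rt{uv}} \equiv 0$ and contribute nothing. Summing the four contributions yields exactly the stated formula for $\E{e(V_{1},V_{2})}$.

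For the variance, expand
\[
\Var{e(V_{1},V_{2})} = \sum_{e, f \in E} \Cov{X_{e}, X_{f}}.
\]
The key observation is that $X_{e}$ depends only on the independent coin flips assigned to those endpoints of $e$ that lie in $B$. Thus if two edges $e, f$ share no common vertex in $B$, their indicators are determined by disjoint sets of independent Bernoullis and so are independent, contributing zero to the sum. Hence only pairs sharing at least one endpoint in $B$ contribute.

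To finish, use the trivial bound $|\Cov{X_{e}, X_{f}}| \leq 1$ and note that for each fixed $v \in B$, the number of ordered pairs of edges $(e, f)$ both incident to $v$ (as head or tail) is at most $d(v)^{2}$. Summing over $v \in B$ covers every contributing pair, so
\[
\Var{e(V_{1},V_{2})} \leq \sum_{v \in B} d(v)^{2} \leq \Big(\max_{v \in B} d(v)\Big) \sum_{v \in B} d(v) \leq 2m \cdot \max_{v \in B} d(v),
\]
using $\sum_{v \in V} d(v) = 2m$ since each directed edge contributes one to some outdegree and one to some indegree. The strict inequality in the statement is cheap slack: diagonal terms satisfy $\Var{X_{e}} \leq \tfrac14$, and when a vertex in $A$ is shared nothing is lost. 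The proof is essentially routine; the only care required is verifying the independence step, where one must note that endpoints shared purely in $A$ introduce no dependence because those placements are non-random.
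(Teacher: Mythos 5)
Your proposal is correct and follows essentially the same route as the paper: indicator variables with linearity of expectation for the mean, and a covariance expansion in which only pairs of edges sharing an endpoint in $B$ contribute, bounded by $\sum_{v\in B}d(v)^2\le 2m\cdot\max_{v\in B}d(v)$. The only cosmetic difference is that the paper separates the diagonal (variance) terms from the off-diagonal covariances, which is also where it extracts the strict inequality; your remark that the diagonal terms are at most $\tfrac14$ serves the same purpose.
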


\begin{proof}
For each edge $e=\rt{vw}$ of the directed graph $D$, let $\mathbf{1}_{e}$
be the indicator random variable of the event that the edge $e$ becomes
an edge from $V_{1}$ to $V_{2}$. We have 
\[
e(V_{1},V_{2})=\sum_{e}\mathbf{1}_{e}.
\]
Note that
\[
  \E{\mathbf{1}_{e}}=\left\{ \begin{split}1 & \quad \text{ if } v\in
    A_{1},w\in A_{2},\\
    1-p & \quad \text{ if } v\in A_{1},w\in B,\\
    p & \quad \text{ if } v\in B,w\in A_{2},\\
    p(1-p) & \quad \text{ if } v\in B,w\in B,\\
    0 & \quad\text{ otherwise.}
\end{split}
\right.
\]
The claim on the expected value of $e(V_{1},V_{2})$ immediately follows
from linearity of expectation.

To estimate the variance of $e(V_{1},V_{2})$, it suffices to focus
on the edges $e=\rt{vw}$ for which $(v\in A_{1},w\in B)$, $(v\in B,w\in A_{2})$,
or $(v,w\in B)$, as all other edges have constant contribution towards
$e(V_{1},V_{2})$. Let $E_{1,2}$ be the set of such edges. We have
\[
\Var{\sum_{e}\mathbf{1}_{e}}=\Var{\sum_{e\in E_{1,2}}\mathbf{1}_{e}}=\sum_{e\in E_{1,2}}\Var{\mathbf{1}_{e}}+\sum_{e,e'\in E_{1,2},e\neq e'}\Cov{\mathbf{1}_{e},\mathbf{1}_{e'}}.
\]
For $e\in E_{1,2}$, we have
$\Var{\mathbf{1}_{e}}\le\E{\mathbf{1}_{e}}\le1$.  For the second term, we
have $\Cov{\mathbf{1}_{e},\mathbf{1}_{e'}}=0$ if $e$ and $e'$ do not share
a vertex. If $e, e' \in E_{1,2}$ go between the same pair of endpoints, but
in opposite directions, then they can never simultaneously contribute to
$e(V_1, V_2)$, and hence $\Cov{\mathbf{1}_{e},\mathbf{1}_{e'}} \leq 0$.
Furthermore, if $e,e'\in E_{1,2}$ share an endpoint in $A$ but have
distinct endpoints in $B$, then 
\[
\Cov{\mathbf{1}_{e},\mathbf{1}_{e'}}=\E{\mathbf{1}_{e}\mathbf{1}_{e'}}-\E{\mathbf{1}_{e}}\E{\mathbf{1}_{e'}}=0.
\]
Hence the only positive contributions to
$\Cov{\mathbf{1}_{e},\mathbf{1}_{e'}}$ come when $e$ and $e'$ share a
vertex in $B$. Since
$\Cov{\mathbf{1}_{e},\mathbf{1}_{e'}}\le\E{\mathbf{1}_{e}\mathbf{1}_{e'}}\le1$,
we have 
\begin{align*}
\sum_{e,e'\in E_{1,2},e\neq e'}\Cov{\mathbf{1}_{e},\mathbf{1}_{e'}} & \le\sum_{v\in B}d(v)(d(v)-1)\\
 & \le\left(\sum_{v\in B}d(v)\right) \left(\max_{v\in B}d(v)-1\right)\\
 & \le2m\left(\max_{v\in B}d(v)-1\right).
\end{align*}
Thus 
\[
\Var{\sum_{e\in E_{1,2}}\mathbf{1}_{e}}\le\left(\sum_{e\in
  E_{1,2}}1\right)+2m\left(\max_{v\in B}d(v)-1\right)<2m\cdot\max_{v\in B}d(v).
\]

\end{proof}
This implies the following lemma.
\begin{lem}
\label{lem:partition_secondmoment}Let $D=(V,E)$ be a given directed
graph with $m$ edges. Let $p$ be a real satisfying $p\in[0,1]$,
and $\varepsilon$ be a positive real. Suppose that a subset $A\subset V$
and its partition $A=A_{1}\cup A_{2}$ are given, and let $B=V\setminus A$.
Further suppose that $\max_{v\in B}d(v)\le\frac{\varepsilon^{2}}{4}m$.
Then there exists a partition $V_{1}\cup V_{2}$ for which 
\begin{align*}
e(V_{1},V_{2}) & \ge e(A_{1},A_{2})+(1-p)\cdot e(A_{1}, B)+p\cdot e(B, A_{2})+p(1-p)\cdot e(B)-\varepsilon m\quad\textrm{and}\\
e(V_{2},V_{1}) & \ge e(A_{2},A_{1})+p\cdot e(A_{2}, B)+(1-p)\cdot e(B, A_{1})+p(1-p)\cdot e(B)-\varepsilon m.
\end{align*}
\end{lem}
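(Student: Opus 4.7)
The plan is to deduce Lemma~\ref{lem:partition_secondmoment} from Lemma~\ref{lem:secondmoment} by a routine Chebyshev-plus-union-bound argument. Consider the random bipartition $V = V_1 \cup V_2$ described in the statement, and let $X_1 = e(V_1, V_2)$ and $X_2 = e(V_2, V_1)$. By Lemma~\ref{lem:secondmoment} applied directly, the expectation $\E{X_1}$ is exactly the quantity
\[
e(A_1, A_2) + (1-p)\cdot e(A_1, B) + p \cdot e(B, A_2) + p(1-p)\cdot e(B)
\]
appearing on the right-hand side of the first target inequality, and the variance satisfies
\[
\Var{X_1} \;<\; 2m \cdot \max_{v \in B} d(v) \;\le\; 2m \cdot \tfrac{\varepsilon^2}{4} m \;=\; \tfrac{\varepsilon^2}{2} m^2.
\]
An identical computation applied with the roles of $A_1$ and $A_2$ swapped (and $p$ replaced by $1-p$, which does not change the distribution of the partition) yields the analogous expectation formula for $\E{X_2}$ and the same variance bound $\Var{X_2} < \tfrac{\varepsilon^2}{2} m^2$.

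Next I would apply Chebyshev's inequality to each of $X_1$ and $X_2$ separately. For $i \in \{1,2\}$,
\[
\pr{X_i < \E{X_i} - \varepsilon m} \;\le\; \frac{\Var{X_i}}{\varepsilon^2 m^2} \;<\; \frac{1}{2}.
\]
A union bound then shows that the probability that either $X_1 < \E{X_1} - \varepsilon m$ or $X_2 < \E{X_2} - \varepsilon m$ is strictly less than $1$, so with positive probability both lower bounds hold simultaneously. Fixing any such outcome of the random choice produces the desired deterministic bipartition.

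The only mildly delicate point is that a single Chebyshev estimate for $X_1$ gives a deviation probability of at most $\tfrac{1}{2}$, which would be useless after a union bound if the inequality were not strict. This is precisely why the variance bound in Lemma~\ref{lem:secondmoment} was stated as a strict inequality: together with the hypothesis $\max_{v \in B} d(v) \le \tfrac{\varepsilon^2}{4} m$ it yields a strict bound $< \tfrac{1}{2}$ for each direction, so the union bound gives total failure probability strictly less than $1$. No further combinatorial input is needed.
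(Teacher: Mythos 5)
Your proof is correct and follows essentially the same route as the paper: apply Lemma~\ref{lem:secondmoment} to get the expectation and the strict variance bound $\Var{e(V_i,V_{3-i})}<\frac{\varepsilon^2}{2}m^2$, use Chebyshev to bound each failure probability strictly below $\frac{1}{2}$, and conclude by a union bound. Your observation about why strictness matters is exactly the point the paper relies on (and your symmetry argument for $e(V_2,V_1)$ is the paper's ``similarly'').
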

\begin{proof}
Let $V_{1}\cup V_{2}$ be the partition obtained by placing each vertex
in $B$ independently in $V_{1}$ or $V_{2}$, with probability $p$
and $1-p$, respectively. Let
\[
m_{1,2}=e(A_{1},A_{2})+(1-p)\cdot e(A_{1}, B)+p\cdot e(B, A_{2})+p(1-p)\cdot e(B),
\]
and 
\[
m_{2,1}=e(A_{2},A_{1})+p\cdot e(A_{2}, B)+(1-p)\cdot e(B, A_{1})+p(1-p)\cdot e(B).
\]
By Lemma \ref{lem:secondmoment} and Chebyshev's inequality,
\[
\mathbf{P}\Big(e(V_{1},V_{2})\ge m_{1,2}-\varepsilon m\Big)\le\frac{\Var{e(V_{1},V_{2})}}{\varepsilon^{2}m^{2}}<\frac{2m\cdot\max{}_{v\in B}d(v)}{\varepsilon^{2}m^{2}}\le\frac{(\varepsilon^{2}/2)m^{2}}{\varepsilon^{2}m^{2}}=\frac{1}{2}.
\]
Similarly, we have 
\[
\mathbf{P}\Big(e(V_{1},V_{2})\ge m_{2,1}-\varepsilon m\Big)<\frac{1}{2}.
\]
Hence there exists a partition $V=V_{1}\cup V_{2}$ for which $e(V_{1},V_{2})\ge m_{1,2}-\varepsilon m$
and $e(V_{2},V_{1})\ge m_{2,1}-\varepsilon m$ both hold.
\end{proof}

The next statement is an immediate corollary of the lemma.

\begin{prop} 
  \label{prop:second_moment}
  For $\varepsilon > 0$, let $D=(V,E)$ be an $n$-vertex directed graph with
  $m$ edges, such that all degrees are at most
  $\frac{\varepsilon^{2}}{4}m$, or $m \geq 8 \varepsilon^{-2}n$.
  Then there exists a partition $V_{1}\cup V_{2}$ for which both
  $e(V_{1},V_{2})$ and $e(V_{2},V_{1})$ are at least
  $\big(\frac{1}{4}-\varepsilon\big)m$.
\end{prop}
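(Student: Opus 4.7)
The plan is to apply Lemma \ref{lem:partition_secondmoment} with the trivial choice $A = \emptyset$, so that $A_1 = A_2 = \emptyset$ and $B = V$, and with $p = 1/2$. With these choices, every term involving $A_1$ or $A_2$ vanishes, and $e(B) = m$, so the conclusion of Lemma \ref{lem:partition_secondmoment} will give exactly
\[
  e(V_1, V_2) \;\geq\; p(1-p)\cdot e(B) - \varepsilon m \;=\; \tfrac{m}{4} - \varepsilon m,
\]
and symmetrically $e(V_2, V_1) \geq \tfrac{m}{4} - \varepsilon m$, as desired.

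The only thing to verify is the degree hypothesis $\max_{v \in B} d(v) \leq \tfrac{\varepsilon^2}{4} m$ of Lemma \ref{lem:partition_secondmoment}. In the first case of the proposition, this is exactly what is assumed (since $B = V$), so we may apply the lemma directly. In the second case, I would observe that in any directed graph on $n$ vertices (allowing antiparallel edges but no loops or multi-edges), every vertex satisfies $d(v) \leq 2(n-1) < 2n$; combined with the hypothesis $m \geq 8 \varepsilon^{-2} n$, which rearranges to $2n \leq \tfrac{\varepsilon^2}{4} m$, this yields $d(v) < \tfrac{\varepsilon^2}{4} m$ for every $v \in V = B$. Thus the second case reduces to the first, and Lemma \ref{lem:partition_secondmoment} applies.

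There is essentially no obstacle here: the proposition is a clean specialization of the previous lemma, and the only mildly substantive point is recognizing that the numerical hypothesis $m \geq 8 \varepsilon^{-2} n$ forces the trivial bound $d(v) < 2n$ to already imply the required degree bound, so the two alternative hypotheses both fall under the same single application of Lemma \ref{lem:partition_secondmoment}.
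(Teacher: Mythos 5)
Your proposal is correct and matches the paper's own justification: the paper likewise notes that $m \ge 8\varepsilon^{-2}n$ forces $\max_v d(v) \le 2n \le \frac{\varepsilon^2}{4}m$, and then applies Lemma \ref{lem:partition_secondmoment} with $p=\frac{1}{2}$ and $A_1=A_2=\emptyset$. Nothing further is needed.
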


Note that $m\ge 8\varepsilon^{-2}n$ implies that the maximum degree is at most $\frac{\varepsilon^2}{4}m$,
since all degrees of a directed graph are at most $2n$. Hence Proposition \ref{prop:second_moment} indeed is an immediate corollary of the lemma
(where we take $p=\frac{1}{2}$ and $A_1 = A_2 = \emptyset$).

\section{Large bipartition\label{sec:bisection}}

As noticed in \cite{ErGyKo97,PoTu82}, the results that Edwards proved
in \cite{Edwards75} implicitly imply that connected graphs with $n$
vertices and $m$ edges admit a bipartition of size at least
\[
\frac{m}{2}+\frac{n-1}{4}.
\]
In fact, for even integers $n$ we have $\lceil\frac{m}{2}+\frac{n-1}{4}\rceil\ge\frac{m}{2}+\frac{n}{4}$,
and thus the above bound implies that a graph with $\tau$ odd components
admits a bipartition of size at least 
\[
\frac{m}{2}+\frac{n-\tau}{4}.
\]

A bisection of a graph is a bipartition of its vertex set in which the
number of vertices in the two parts differ by at most one. In
\cite{LeLoSu}, we extended the bound above to bisections and proved that
every graph with $n$ vertices, $m$ edges, $\tau$ odd components, and
maximum degree $\Delta$ admits a bisection of size at least
\[
\frac{m}{2}+\frac{n-\max\{\tau,\Delta-1\}}{4}.
\]

We then developed a randomized algorithm which asymptotically achieves
the bound above (and some other estimates as well), based on the proof
of this theorem. This algorithm turned out to be a powerful new tool
in obtaining a judicious bisection result. In this paper, we adjust
the randomized algorithm for directed graphs. The following theorem
is one of the main tools of this paper.
\begin{thm}
\label{thm:random_bisection}Given any real constants $C,\varepsilon>0$,
there exist $\gamma,n_{0}>0$ for which the following holds. Let $D=(V,E)$
be a given directed graph with $n\geq n_{0}$ vertices and at most
$Cn$ edges, and let $A\subset V$ be a set of at most $\gamma n$
vertices which have already been partitioned into $A_{1}\cup A_{2}$.
Let $B=V\setminus A$, and suppose that every vertex in $B$ has degree
at most $\gamma n$ (with respect to the full $D$). Let $\tau$ be
the number of odd components in $D[B]$. Then, there is a bipartition
$V=V_{1}\cup V_{2}$ with $A_{1}\subset V_{1}$ and $A_{2}\subset V_{2}$,
such that both 
\begin{align*}
e(V_{1},V_{2}) & \ge e(A_{1},A_{2})+\frac{e(A_{1},B)+e(B,A_{2})}{2}+\frac{e(B)}{4}+\frac{n-\tau}{8}-\varepsilon n\\
e(V_{2},V_{1}) & \ge e(A_{2},A_{1})+\frac{e(B,A_{1})+e(A_{2},B)}{2}+\frac{e(B)}{4}+\frac{n-\tau}{8}-\varepsilon n\,.
\end{align*}

\end{thm}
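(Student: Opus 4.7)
Plan: The strategy is to combine the Edwards-type undirected bipartition bound recalled at the start of Section~\ref{sec:bisection} with a component-wise symmetrization that balances the two directed cut sizes, following the randomized approach of \cite{LeLoSu}.

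Construction: First, apply the undirected Edwards-type bound to the underlying graph of $D[B]$ (treating antiparallel pairs as double edges), producing a bipartition $B=B_{1}^{\ast}\cup B_{2}^{\ast}$ with undirected cut at least $\frac{e(B)}{2}+\frac{|B|-\tau}{4}$. Second, let $C_{1},\ldots,C_{k}$ be the connected components of $D[B]$; for each $j$, flip an independent fair coin $X_{j}$, placing $C_{j}\cap B_{1}^{\ast}$ in $V_{1}$ and $C_{j}\cap B_{2}^{\ast}$ in $V_{2}$ if $X_{j}=0$, and swapping otherwise. Set $V_{i}=A_{i}\cup B_{i}$. By linearity, every $A$-to-$B$ or $B$-to-$A$ edge contributes $\tfrac{1}{2}$ to each of $\E{e(V_{1},V_{2})}$ and $\E{e(V_{2},V_{1})}$, since its $B$-endpoint lies in $V_{1}$ or $V_{2}$ with equal probability; every within-$B$ edge that crosses the Edwards bipartition of its component contributes $\tfrac{1}{2}$, and the rest contribute $0$, by the symmetry of $X_{j}$. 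Summing yields both claimed expected values, with the gap between $|B|$ and $n$ absorbed in $\varepsilon n$ since $|B|\ge n-\gamma n$.

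The main obstacle is concentration. Each coin $X_{j}$ can change $e(V_{1},V_{2})$ by at most $m_{j}$, the total number of edges incident to $C_{j}$, so $\Var{e(V_{1},V_{2})}\le\sum_{j}m_{j}^{2}\le(\max_{j}m_{j})\cdot 2m\le 2Cn\cdot\max_{j}m_{j}$. Chebyshev at deviation $\varepsilon n/2$ requires $\max_{j}m_{j}=O(\varepsilon^{2}n/C)$. Components violating this (call them \emph{exceptional}) number at most $O(C^{2}/\varepsilon^{2})$, which is a constant independent of $n$. I would handle them by a preprocessing loop: iteratively pick an exceptional $C_{j}$, recursively apply the theorem to a suitable subproblem on $A\cup C_{j}$ to obtain a bipartition $C_{j}=C_{j}^{(1)}\cup C_{j}^{(2)}$, and absorb it into $A$ by augmenting $A_{1}\cup A_{2}$. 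A direct expansion shows the reduced target bound (with $|B|$ shrunk by $|C_{j}|$ and $\tau$ possibly decreased by $\mathbf{1}_{|C_{j}|\text{ odd}}$) is exactly what is provided by the per-component guarantee coming from the recursive theorem. The most delicate technical point is verifying that the maximum-degree hypothesis $\max_{v\in B}d(v)\le\gamma n$ survives the recursion after $n$ is replaced by $|A|+|C_{j}|$; this is secured by choosing $\gamma$ and the recursion's $\varepsilon'$ small enough that the $O(1)$ preprocessing steps accumulate total error at most $\varepsilon n/2$. After preprocessing, every remaining component is non-exceptional, and applying Chebyshev separately to $e(V_{1},V_{2})$ and $e(V_{2},V_{1})$ and taking a union bound produces a single random realization achieving both claimed inequalities.
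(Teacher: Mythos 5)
Your construction of the random partition and the expectation computation are fine (and the component-wise symmetrization of an Edwards-type undirected cut is a legitimate alternative to the paper's direct computation, modulo the need to quote the weighted Poljak--Turz\'{\i}k bound so that the additive term $\frac{|C_j|-1}{4}$ survives when antiparallel pairs are counted as double edges). The genuine gap is in the concentration step, which is the heart of this theorem, and your proposed fix for ``exceptional'' components does not work. First, the recursion is circular: an exceptional component $C_j$ is by definition connected, so the instance $D[A\cup C_j]$ again consists of a single component carrying essentially all of its edges, and your argument applied to it faces exactly the same obstacle with no base case. Second, the hypotheses of the theorem are not preserved under the recursion, and no choice of constants repairs this: since $D$ is simple, $|C_j|\ge\sqrt{m_j}$ is the only lower bound available, so an exceptional component may have $n_j'=|A|+|C_j|=\Theta(\sqrt{n})$ vertices while containing $m_j=\Theta(n)$ edges and vertices of degree $\Theta(n)$ (or merely $\Theta(\sqrt n)$, which already exceeds $\gamma' n_j'$). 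Then $m_j/n_j'\to\infty$, so no constant $C'$ satisfies the edge-density hypothesis, and the degree hypothesis $d(v)\le\gamma' n_j'$ fails as well. Note also that the exceptional case is not a fringe case: if $D[B]$ is connected (e.g., a bounded-degree spanning connected subgraph), your scheme uses a single coin for all of $B$, the variance is $\Theta(n^2)$, and the whole burden falls on the broken recursive step; nor can you fall back on Lemma \ref{lem:partition_secondmoment} there, since a memoryless random partition forfeits the $\frac{n_j-1}{8}$ gain, which is $\Theta(n)$ in that case.

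What is missing is a way to refine the randomness \emph{inside} a large component while still certifying the $\frac{n-\tau}{8}$ gain. This is exactly what the paper's Lemma \ref{lem:star_decompose} provides: a maximum matching, chosen to secondarily maximize the number of free vertices, yields a decomposition of $B$ into induced stars $T_i$ plus an independent set $U$ with $|U|\le\tau+\varepsilon n$ (via the tight-component analysis of Lemma \ref{lem:freeandtight}). One then flips an independent coin per star and per $U$-vertex; each star edge crosses with probability $\frac12$ rather than $\frac14$, recovering the $\frac{n-\tau}{8}$ term, while the Lipschitz coefficients are star degree sums, bounded by $(d(u_i)+d(v_i))\frac{4C}{\varepsilon}$, so that $\sum_i C_i^2\le\frac{32C^2}{\varepsilon^2}\sum_{v\in B}d(v)^2\le\frac{32C^2}{\varepsilon^2}(\gamma n)(2Cn)$ and Hoeffding--Azuma applies after choosing $\gamma$ small. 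You would need to import this decomposition (or an equivalent device) to make your argument go through; with only whole-component coins, the theorem as stated cannot be proved.
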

Informally, Theorem \ref{thm:random_bisection} asserts that if the number
of edges and the maximum degree satisfy certain conditions, then we can in
fact obtain an additive term of $\frac{n-\tau}{8}$ over the expected number
of edges in a purely random bipartition.  Consider the directed graphs
given in Section \ref{sec:Introduction} which achieve the upper bound of
Theorem \ref{thm:main}. In the notation of Theorem
\ref{thm:random_bisection}, $A$ is the set whose only element is the vertex
of degree $n-1$, and $B$ is the set of other vertices. Note that the
induced subgraph on $B$ consists of components of odd size. These graphs
are designed to maximize $\tau$, and hence these graphs will turn out to be
the graphs which give the worst bound in Theorem
\ref{thm:random_bisection}.  The proof of this theorem is somewhat
involved, although it is similar to the that of the corresponding theorem
in \cite{LeLoSu}.  The rest of this section is devoted to its proof.

\subsection{Decomposing the graph}

We start with a technical lemma which will provide structural information
about the underlying undirected (simple) graph obtained by ignoring edge
orientations and removing redundant parallel edges when edges in both
directions appear between pairs of vertices.  A \emph{star}\/ is a
bipartite graph on $n$ vertices consisting of a unique vertex of degree
$n-1$, and $n-1$ other vertices of degree one. We refer to the unique
vertex of degree $n-1$ as the \emph{apex}. The following lemma decomposes
an undirected graph (with no loops or multiple edges) into induced stars
plus some leftover vertices.

\begin{lem}
  \label{lem:star_decompose}
  Let $\varepsilon$ and $C$ be arbitrary positive reals.  Let $G$ be an
  undirected graph with $n$ vertices, $m \leq Cn$ edges, maximum degree
  $\Delta$, and $\tau$ odd components.  Then there exists a partition
  $V=T_{1}\cup T_{2}\cup\cdots\cup T_{s}\cup U$ of its vertex set such that 
  \begin{description}
    \item[(i)] each $T_{i}$ induces a star, and $2\le|T_{i}|\le\Delta+1$,
    \item[(ii)] all but at most one non-apex vertex in each $T_{i}$ has
      degree (in the full graph) at most $\frac{2C}{\varepsilon}$, and
    \item[(iii)] $U$ is an independent set of order $|U|\le\tau+\varepsilon n$.
  \end{description}
\end{lem}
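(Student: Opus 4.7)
The plan is to greedily decompose $V$ into induced stars while carefully isolating the few high-degree (``heavy'') vertices. I would first define $H := \{v : d(v) > 2C/\varepsilon\}$; averaging on $\sum_{v \in V} d(v) = 2m \le 2Cn$ immediately gives $|H| \le \varepsilon n$. Every vertex outside $H$ has degree at most $2C/\varepsilon$, which is exactly the light-vertex degree bound that condition (ii) demands of non-apex leaves.

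I would then process the connected components of $G$ one by one. Isolated-vertex components go directly into $U$ (and each also contributes to $\tau$). For each component $K$ with $|K| \ge 2$, the target is to partition $V(K)$ into induced stars of size between $2$ and $\Delta + 1$, plus at most one leftover vertex. The natural algorithm is a leaf-peeling procedure on a spanning tree $F$ of $K$: pick an endpoint of a longest path in $F$, call $u$ its unique non-leaf $F$-neighbor so that every other $F$-neighbor of $u$ is a leaf of $F$, and choose a maximal subset $S$ among those leaves that is independent in $G$. Then $\{u\} \cup S$ induces a star in $G$ whose size lies in $[2, \Delta+1]$. Remove the star from $F$ and iterate; since each step removes at least two vertices, the process terminates with at most one residual vertex from each original component.

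The heavy/light distinction is invoked to enforce (ii). Whenever a candidate star contains two or more heavy leaves, I would retain at most one heavy leaf in $S$ and reassign each displaced heavy vertex $h$ either as the apex of a new $2$-star with an available light neighbor, or, failing that, place $h$ in $U$. Because $|H| \le \varepsilon n$, these corrections move at most $\varepsilon n$ extra vertices into $U$ beyond what the basic peeling already produces. Independence of $U$ is automatic from the construction: a vertex is only ever committed to $U$ after all of its remaining residual neighbors have been absorbed into stars, so no two $U$-vertices can share an edge.

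The main obstacle I anticipate is matching the count $|U| \le \tau + \varepsilon n$. This reduces to showing that the basic peeling on a connected component of even order yields no leftover at all, while on a component of odd order it yields at most one; summed over components this contributes at most $\tau$, and the heavy-vertex corrections add at most $\varepsilon n$ more. Securing the even/odd dichotomy requires a parity-sensitive invariant of the peeling -- roughly, that a residual subforest of even order can always be fully covered by induced stars of size at least $2$ (perhaps by a careful choice of which leaf to drop when two candidate leaves of $u$ are $G$-adjacent, or by a joint induction that tracks both the residual component structure and the parity of its vertex count) -- and this is the delicate technical step of the proof.
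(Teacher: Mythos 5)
There is a genuine gap, and it sits exactly where you flag it: the claim that your peeling leaves no residual vertex in an even component and at most one in an odd component is not a bookkeeping detail to be supplied later --- it \emph{is} the lemma. Everything else in your write-up (the set $H$ of at most $\varepsilon n$ heavy vertices, the star sizes lying in $[2,\Delta+1]$, the independence of $U$) is routine; the only reason the bound is $\tau+\varepsilon n$ rather than (number of components)$\,+\,\varepsilon n$ is this parity dichotomy, and you give no invariant that secures it. Worse, the procedure as literally stated fails on a four-vertex example: take a triangle $abc$ with a pendant vertex $d$ attached to $a$, and the spanning tree equal to the star centered at $a$. Your rule selects $u=a$ with leaf set $\{b,c,d\}$, and every \emph{maximal} $G$-independent subset of these leaves is $\{b,d\}$ or $\{c,d\}$; either choice strands the remaining triangle vertex with no surviving neighbors, producing a leftover in an even connected graph, whereas the partition $\{a,d\}\cup\{b,c\}$ has none. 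So maximality of $S$ is actively harmful, the outcome depends on the choice of spanning tree, and after a removal the residual forest can acquire isolated vertices that still have $G$-edges, so the recursion does not cleanly restart. None of this is addressed.

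The paper takes a different route that supplies precisely the missing engine. It starts from a maximum matching chosen to secondarily maximize the number of \emph{free} unmatched vertices, attaches each low-degree free vertex to its first free matching edge to form the induced stars, and proves (Lemma \ref{lem:freeandtight}) that the non-free unmatched vertices are in bijective correspondence with \emph{tight} components --- components in which, for every vertex $v$, the rest has a perfect matching and $v$ meets each matching edge in $0$ or $2$ endpoints. Tight components are necessarily odd, which is what converts ``leftover vertices'' into ``odd components'' and yields $|U|\le\tau+\varepsilon n$. That bijection is established by a page of augmenting-path and exchange arguments, and it is the nontrivial content that your proposal would need to reconstruct (the earlier Erd\H{o}s--Gy\'arf\'as--Kohayakawa version, bounded by the number of \emph{all} components, is what a parity-blind peeling would at best give). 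To repair your approach you would either have to design a peeling rule with a provable parity invariant --- which I do not see how to do directly --- or fall back on the matching-based argument, at which point you are reproducing the paper's proof.
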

The lemma above is implicitly proved in \cite{LeLoSu}. A similar lemma also
appears in the paper of Erd\H{o}s, Gy\'arfas, and Kohayakawa
\cite{ErGyKo97}, but their bound is in terms of the number of connected
components, not the number of odd components.  In order to prove this
lemma, we first take a maximum matching. Afterwards, for the leftover
vertices which are not covered by the matching, we attempt to find an edge
in the matching with which the vertex will create an induced star. By
systematically assigning each leftover vertex in this way, we will
eventually obtain the partition described in Lemma
\ref{lem:star_decompose}.  In order to provide the full details for this
argument, it is convenient to introduce the following concept.

\medskip

\begin{defn}
Let $\{e_{1},\ldots,e_{s}\}$ be the edges of a maximum matching in
a graph $G=(V,E)$, and let $W$ be the set of vertices not in the
matching. With respect to this fixed matching, say that a vertex $v$
in a matching edge $e_{i}$ is a \emph{free neighbor} of a vertex
$w\in W$ if $w$ is adjacent to $v$, but $w$ is not adjacent to
the other endpoint of $e_{i}$. In this case, we also say that $e_{i}$
is a \emph{free neighbor} of $w$. Call a vertex $w\in W$ a \emph{free
vertex} if it has at least one free neighbor.
\end{defn}
A \emph{tight component} is a connected component $T$ such that for every
$v\in T$, the subgraph induced by $T\setminus\{v\}$ contains a perfect
matching, and every perfect matching of $T\setminus\{v\}$ has the property
that no edge of the perfect matching has exactly one endpoint adjacent to
$v$. Note that a tight component is necessarily an odd component.  The
following lemma delineates the relationship between non-free vertices and
tight components.
\begin{lem}
\label{lem:freeandtight} Let $\{e_{1},\ldots,e_{s}\}$ be the edges
of a maximum matching in an undirected graph $G=(V,E)$, and let $W$ be the
set of vertices not in the matching. Further assume that among all
matchings of maximum size, we have chosen one which maximizes the number of
free vertices in $W$. Then, every tight component contains a distinct
non-free vertex of $W$, and all non-free $W$-vertices are covered in this
way (there is a bijective correspondence).\end{lem}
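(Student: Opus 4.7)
The proof splits into two directions: every tight component yields a non-free vertex of $W$, and every non-free $W$-vertex comes from a unique tight component. For the first direction, fix a tight component $T$ and let $M_T$ denote the restriction of $M$ to $T$. Since $T$ is a union of connected components of $G$, no edge of $M$ straddles $T$, and maximality of $M$ forces $M_T$ to be a maximum matching of $T$. The tight condition applied to any $v\in T$ gives that $T\setminus\{v\}$ has a perfect matching, so $|T|$ is odd and $M_T$ leaves exactly one vertex $w_T\in T\cap W$ unmatched. Applying the tight condition with $v=w_T$ to the perfect matching $M_T$ of $T\setminus\{w_T\}$, no edge of $M_T$ has exactly one endpoint in $N(w_T)$. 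Since $N(w_T)\subseteq T$ while every edge of $M\setminus M_T$ is vertex-disjoint from $T$, no edge of $M$ has exactly one endpoint in $N(w_T)$, so $w_T$ is non-free. Distinct tight components clearly yield distinct $w_T$'s.

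For the second direction, let $w\in W$ be non-free and let $T$ be its connected component in $G$. Non-freeness forces every vertex of $N(w)$ to be matched by $M$ to another vertex of $N(w)$; in particular, $w$ has no neighbor in $W$. The key tool is the following swap argument, which exploits the extremal choice of $M$. Assuming for the moment that $w$ is the unique unmatched vertex of $T$, suppose that for some $v\in T$ some perfect matching $M^{*}$ of $T\setminus\{v\}$ contains an edge $xy$ with $x\in N(v)$ and $y\notin N(v)$. Form $M':=(M\setminus M_T)\cup M^{*}$; this is a maximum matching of $G$ with unmatched set $(W\setminus\{w\})\cup\{v\}$ (which equals $W$ when $v=w$). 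Because $M$ and $M'$ agree outside $T$ and no vertex of $W\setminus\{w\}$ has any neighbor in $T$, each such vertex retains its free/non-free status under $M'$. The edge $xy$ witnesses that the new unmatched vertex (either $v\neq w$, or $w$ itself when $v=w$) is free under $M'$. Hence $M'$ has strictly more free vertices than $M$, contradicting the extremal choice of $M$; this rules out bad $M^{*}$ and yields the tight matching condition.

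It remains to show that $w$ is the only $W$-vertex of $T$ and that $T\setminus\{v\}$ has a perfect matching for every $v\in T$. The latter, granted the former, follows by Gallai--Edmonds-style reasoning: with $w$ the unique unmatched vertex of $T$, every $v\in T$ is reachable from $w$ by an $M_T$-alternating path, and rotating along it yields a perfect matching of $T\setminus\{v\}$. For the former, if some other $w'\in W$ lay in $T$, then an $M_T$-alternating path from $w$ to $w'$ (prevented from being augmenting by the maximality of $M$) combined with a rotation along the path would yield a maximum matching with one more free vertex, again contradicting the extremal choice of $M$. The step I expect to be most delicate is precisely this uniqueness argument: one must convert the coexistence of two unmatched vertices into an explicit matching swap that strictly improves the free count, simultaneously exploiting the non-freeness of $w$ and the structure of alternating paths in $T$. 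This parallels the analogous reasoning in \cite{LeLoSu}, whose proof of the undirected version of Lemma \ref{lem:star_decompose} contains the underlying mechanism.
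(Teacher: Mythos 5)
The first direction of your argument (every tight component contributes a distinct non-free $W$-vertex) is correct and matches the paper. For the converse, however, your top-down strategy --- take the whole connected component $T$ of a non-free $w$ and verify the tight-component conditions directly --- leaves the two hardest sub-claims unproven. Your swap argument for the ``no matching edge of $M^{*}$ meets $N(v)$ in exactly one vertex'' condition is fine, but it is explicitly conditional on (a) $w$ being the unique unmatched vertex of $T$ and (b) $T\setminus\{v\}$ having a perfect matching for every $v\in T$, and neither is established. For (b), the assertion that uniqueness of the unmatched vertex already yields factor-criticality via alternating paths is false: take $T$ with vertex set $\{w,a,b,c,d\}$, edges $wa,wb,ab,bc,cd$, and maximum matching $\{ab,cd\}$. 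Then $w$ is the unique unmatched vertex and is even non-free (it sees both endpoints of $ab$ and neither endpoint of $cd$), yet $T\setminus\{c\}$ has no perfect matching, and $c$ is not reachable from $w$ by an even alternating path. What rules this configuration out is only the secondary extremality of the matching (replacing $ab$ by $wa$ makes $b$ free), so factor-criticality genuinely requires a further exchange argument that you do not supply. For (a), you yourself flag the step as delicate and give only a heuristic: a connected component can in general contain many $W$-vertices (a star already does), so non-freeness of $w$ must enter, and the proposed ``alternating path from $w$ to $w'$ plus a rotation'' neither obviously exists nor obviously creates a free vertex.

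The paper sidesteps both difficulties by arguing bottom-up: it takes a maximal vertex set $T\ni w$ that induces a tight component and does not cut any matching edge (starting from $T=\{w\}$), and shows that if $T$ has any outside neighbor then one can either augment the matching or perform a swap increasing the number of free vertices, or else enlarge $T$ by one matching edge while preserving tightness. Tightness of the enlarged set is checked locally using the pairwise adjacency of $v_{1},v_{2},v'$ just established, so factor-criticality is never needed for the full component all at once. To repair your proof you would need to supply genuine extremality-based arguments for both (a) and (b); as written, the converse direction has a real gap.
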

\begin{proof}
\noindent The matching $\{e_{1},\ldots,e_{s}\}$ must be maximal within
each connected component. One basic property of a tight component
is that it contains an almost-perfect matching which misses only one
vertex. Consequently, by maximality, in every tight component $T$,
$\{e_{1},\ldots,e_{s}\}$ must miss exactly one vertex $w\in W$.
Furthermore, the second property of a tight component is that $w$
must have either 0 or 2 neighbors in each edge $e_{i}$ in $T$ (and
$w$ must have 0 neighbors in each edge $e_{j}$ not in $T$, since
$T$ is the connected component containing $w$). Therefore, the unique
vertex $w$ is in fact a non-free $W$-vertex contained in $T$.

The remainder of the proof concentrates on the more substantial part
of the claim, which is that each non-free $W$-vertex is contained
in some tight component. Consider such a vertex $w$, and let $T$
be a maximal set of vertices which (i) contains $w$, (ii) induces
a connected graph which is a tight component, and (iii) does not cut
any $e_{i}$. Since the set $\{w\}$ satisfies (i)--(iii), our optimum
is taken over a non-empty set, and so $T$ exists.

If $T$ is already disconnected from the rest of the graph, then we
are done. So, consider a vertex $v\not\in T$ which has a neighbor
$v'\in T$. If $v\in W$, then we can modify our matching by taking
the edge $vv'$, and changing the matching within $T$ by using property
(ii) to generate a new matching of $T\setminus\{v'\}$. This will
not affect the matching outside of $T\cup\{v\}$, because property
(iii) insulates the adjustments within $T$ from the rest of the matching
outside. We would then obtain a matching with one more edge, contradicting
maximality. Therefore, all vertices $v\not\in T$ which have neighbors
in $T$ also satisfy $v\not\in W$.

Let us then consider a vertex $v_{1}\not\in T \cup W$ with a neighbor $v'\in T$.
We now know that $v_{1}$ must be covered by a matching edge; let
$v_{2}$ be the other endpoint of that edge. By (iii), we also have
$v_{2}\not\in T$. Note that $v_{2}$ cannot have a neighbor $w'\in W\setminus T$,
or else we could improve our matching by replacing $v_{1}v_{2}$ with
$w'v_{2}$ and $v_{1}v'$, and then using (ii) to take a perfect matching
of $T\setminus\{v'\}$.

Our next claim is that $v_{2}$ must be adjacent to $v'$ as well.
Indeed, assume for contradiction that this is not the case. Then,
consider modifying our matching by replacing $v_{1}v_{2}$ with the
edge $v_{1}v'$ and changing the matching within $T$ by using (ii)
to generate a new matching of $T\setminus\{v'\}$. As before, (iii)
ensures that the result is still a matching. This time, the new matching
has the same size as the original one, but with more free $W$-vertices
(note that the vertex $v_{2}$ replaced the vertex $w$ in the set
$W$). To see this, observe that $v_{2}$ is now unmatched and free
because it is adjacent to $v_{1}$ but not $v'$. Previously, the
only $W$-vertex inside $T$ was our original $w$, which we assumed
to be non-free in the first place. Also, no other vertices outside
of $T$ changed from being free to non-free, because we already showed
that no $W$-vertices outside of $T$ were adjacent to $T\cup\{v_{2}\}$,
and so any vertex that was free by virtue of its adjacency with $v_{1}$
but not $v_{2}$ is still free because it is not adjacent to $v'$
either. This contradiction to maximality establishes that $v_{2}$
must be adjacent to $v'$.

We now have $v_{1}$, $v_{2}$, and $v'$ all adjacent to each other,
and no vertices of $W\setminus T$ are adjacent to $T\cup\{v_{1},v_{2}\}$.
Our argument also shows that for any $v''\in T$ which is adjacent
to one of $v_{1}$ or $v_{2}$, it also must be adjacent to the other.
Our final objective is to show that $T'=T\cup\{v_{1},v_{2}\}$ also
satisfies (i)--(iii), which would contradict the maximality of $T$.
Properties (i) and (iii) are immediate, so it remains to verify the
conditions of a tight component. Since $T$ is tight and $v_{1}v_{2}$
is an edge, $T'\setminus\{u\}$ has a perfect matching for any $u\in T$.
The tightness of $T$ and the pairwise adjacency of $v_{1}$, $v_{2}$,
and $v'$ also produce this conclusion if $u\in\{v_{1},v_{2}\}$.
It remains to show that for any $u\in T'$ and any perfect matching
of $T'\setminus\{u\}$, $u$ has either 0 or 2 vertices in every matching
edge. But if this were not the case, then we could replace the matching
within $T'$ with the violating matching of $T'\setminus\{u\}$. The
two matchings would have the same size, but $u$ would become a free
vertex. No other vertex of $W$ is adjacent to $T'$ by our observation
above, so the number of free vertices would increase, contradicting
the maximality of our initial matching. Therefore, $T'$ induces a
tight component, contradicting the maximality of $T$. We conclude
that $T$ must have been disconnected from the rest of the graph,
as required. 
\end{proof}

We are now ready to prove Lemma \ref{lem:star_decompose}.

\begin{proof}[Proof of Lemma \ref{lem:star_decompose}]
Start by taking a maximum matching $\{e_{1},\ldots,e_{s}\}$ which
secondarily maximizes the number of free vertices in
$W=V\setminus\{e_{1},\ldots,e_{s}\}=\{w_{1},\ldots,w_{r}\}$, so that we can
apply Lemma \ref{lem:freeandtight}.  By maximality, $W$ is an independent
set.  Let $U\subset W$ be the set of vertices which are either not free, or
have degree at least $\frac{2C}{\varepsilon}$. Since all tight components
have odd order, by Lemma \ref{lem:freeandtight}, there are at most $\tau$
non-free vertices. On the other hand, since there are at most $Cn$ edges
in total, there are at most $\varepsilon n$ vertices which have degree at
least $\frac{2C}{\varepsilon}$. Hence $|U|\le\tau+\varepsilon n$, giving
(iii).

We now construct the induced stars. Let $T_{i}$ be the union of the set of
vertices of $e_{i}$ and the set of vertices $w\in W\setminus U$ for which
$i$ is the minimum index where $e_{i}$ is a free neighbor of $w$.  This is
a partition $V=T_{1}\cup T_{2}\cup\cdots\cup T_{s}\cup U$ because each
vertex in $W\setminus U$ has at least one free neighbor by construction.
Since our matching is maximal, there cannot be any $e_{i}=vv'$ such
that $vw$ and $v'w'$ are both edges to distinct vertices $w,w' \in W$. So,
if two vertices $w,w'\in W$ each have a free neighbor in an $e_{i}$, then
their free neighbor is the same vertex.  This, together with the fact that
$W$ is an independent set, implies that each $T_{i}$ induces a star, giving
(i).  Finally, all vertices in each $T_{i}$ outside of $e_{i}$ have degree
at most $\frac{2C}{\varepsilon}$, and thus (ii) holds.
\end{proof}

\subsection{Randomized algorithm}

In this subsection, we use the following martingale concentration result
(essentially the Hoeffding-Azuma inequality) to control the performance of
the randomized partitioning algorithm at the heart of Theorem
\ref{thm:random_bisection}.
\begin{thm}
\label{thm:azuma} (Corollary 2.27 in \cite{JLR}.) Given real numbers
$\lambda,C_{1},\ldots,C_{n}>0$, let $f:\{0,1\}^{n}\rightarrow\mathbb{R}$
be a function satisfying the following Lipschitz condition: whenever
two vectors $z,z'\in\{0,1\}^{n}$ differ only in the $i$-th coordinate
(for any $i$), we always have $|f(z)-f(z')|\leq C_{i}$. Suppose
$X_{1},X_{2},\ldots,X_{n}$ are independent random variables, each
taking values in $\{0,1\}$. Then, the random variable $Y=f(X_{1},\ldots,X_{n})$
satisfies 
\[
\pr{|Y-\E Y|\geq\lambda}\leq2\exp\left\{ -\frac{\lambda^{2}}{2\sum C_{i}^{2}}\right\} .
\]
\end{thm}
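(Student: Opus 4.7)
The plan is to prove this via the standard Doob martingale reduction to the Azuma--Hoeffding inequality for bounded martingale differences. First I would set $Y_k = \E{Y \mid X_1, \ldots, X_k}$ for $0 \le k \le n$, so that $Y_0 = \E{Y}$ is deterministic and $Y_n = Y$. This forms a martingale with respect to the filtration generated by $(X_1, \ldots, X_n)$, and writing $D_k = Y_k - Y_{k-1}$ gives the telescoping decomposition $Y - \E{Y} = \sum_{k=1}^n D_k$ with $\E{D_k \mid X_1, \ldots, X_{k-1}} = 0$.

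The first key step is to establish the almost-sure bound $|D_k| \le C_k$. Here is where the independence of the $X_i$ is essential: letting $X_k'$ be an independent copy of $X_k$, one may write
\[
D_k = \E{f(X_1, \ldots, X_n) - f(X_1, \ldots, X_{k-1}, X_k', X_{k+1}, \ldots, X_n) \mid X_1, \ldots, X_k},
\]
because the average over $X_k'$ of the second term equals $Y_{k-1}$ thanks to independence of $X_k'$ from $(X_1, \ldots, X_{k-1})$. The integrand differs only in the $k$-th coordinate, so the Lipschitz hypothesis on $f$ forces each realization to lie in $[-C_k, C_k]$, and hence so does the conditional expectation.

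The second key step is the moment generating function bound. Since $D_k$ is mean-zero conditional on the past and bounded in $[-C_k, C_k]$, Hoeffding's lemma gives
\[
\E{e^{t D_k} \mid X_1, \ldots, X_{k-1}} \le \exp\!\left(\tfrac{t^2 C_k^2}{2}\right).
\]
Iterating via the tower property and multiplying over $k$ yields $\E{e^{t(Y - \E{Y})}} \le \exp\!\left(\tfrac{t^2}{2} \sum_k C_k^2\right)$. Markov's inequality applied to $e^{t(Y-\E{Y})}$ and optimization at $t = \lambda / \sum_k C_k^2$ gives $\pr{Y - \E{Y} \ge \lambda} \le \exp(-\lambda^2 / (2\sum_k C_k^2))$, and the same bound applied with $-f$ in place of $f$ handles the lower tail; a union bound produces the factor of $2$.

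The only real subtlety is the first step: without independence of the $X_i$, conditioning on the past could shift the distribution of the remaining coordinates and destroy the pointwise bound on $D_k$. Hoeffding's lemma itself is standard (it follows from convexity of $e^{tx}$ on $[-C_k, C_k]$ or from a direct Taylor estimate), and the final optimization in $t$ is routine.
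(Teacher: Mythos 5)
Your proof is correct and complete: the Doob martingale $Y_k=\E{Y\mid X_1,\dots,X_k}$, the resampling identity that expresses $D_k$ as a conditional expectation of $f(X)-f(X^{(k)})$ (where only the $k$-th coordinate is replaced by an independent copy, so the Lipschitz hypothesis gives $|D_k|\le C_k$ pointwise), the conditional Hoeffding lemma yielding $\E{e^{tD_k}\mid\mathcal{F}_{k-1}}\le e^{t^2C_k^2/2}$, and the Chernoff optimization at $t=\lambda/\sum C_i^2$ all fit together and reproduce exactly the stated constant. The paper does not prove this statement at all --- it is quoted verbatim as Corollary~2.27 of Janson--\L{}uczak--Ruci\'nski --- and your argument is precisely the standard derivation used there (Azuma's inequality for bounded martingale differences, specialized to product spaces via the Doob/exposure martingale), so there is nothing to reconcile.
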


We are now ready to prove Theorem \ref{thm:random_bisection}, which will be
the core result for our main theorem.

\medskip

\noindent \emph{Proof of Theorem \ref{thm:random_bisection}.}
Without loss of generality, assume that $C > 1$, $\epsilon < 1$, and $n$ is
sufficiently large.  Apply Lemma \ref{lem:star_decompose} to the underlying
undirected (simple) graph induced by $B$, and obtain a partition
$B=T_{1}\cup\cdots\cup T_{s}\cup U$. Note that since we are assured that
the full digraph $D$ contains at most $Cn$ edges, we can actually establish
in part (ii) of Lemma \ref{lem:star_decompose} that the degree bound of
$\frac{2C}{\varepsilon}$ holds with respect to total degrees (in- plus
outdegrees) in the full digraph $D$, not just in the undirected simple
graph on $B$.  (When constructing $U \subset W$ in the proof of that lemma,
one may absorb all vertices with degree greater than
$\frac{2C}{\varepsilon}$ with respect to the whole graph, not just in the
underlying undirected graph on $B$.) So, we may assume that each $T_i$ has
at most one non-apex vertex with full $D$-degree greater than
$\frac{2C}{\varepsilon}$, and we still have $|U|\le\tau+\varepsilon n$.
Let $v_{i}$ denote the apex vertex of tree $T_{i}$, arbitrarily
distinguishing an apex if $T_i$ has only two vertices.  We now randomly
construct a bipartition $V=V_{1}\cup V_{2}$ by placing each $A_i$ in $V_i$,
and partitioning each $T_i$ by independently placing each apex $v_{i}$ on a
uniformly random side, and then placing the rest of
$T_{i}\setminus\{v_{i}\}$ on the other side. Each remaining vertex (from
the set $U$) is independently placed on a uniformly random side.

Define the random variables $Y_{1}=e(V_{1},V_{2})$ and
$Y_{2}=e(V_{2},V_{1})$.  For an edge $e=\rt{vw}$ of the digraph, let
$\mathbf{1}_{e}$ be the indicator random variable of the event that $v\in
V_{1}$ and $w\in V_{2}$.  Thus $Y_{1}=\sum_{e}\mathbf{1}_{e}$ and
\[
\E{Y_{1}}=\sum_{e}\E{\mathbf{1}_{e}}.
\]
We have $\E{\mathbf{1}_{e}}=1$ if $v\in A_{1}$ and $w\in A_{2}$, and
$\E{\mathbf{1}_{e}}=\frac{1}{2}$ if either $v\in A_{1}$ and $w\in B$, or
$v\in B$ and $w\in A_{2}$. For edges in $D[B]$, the gain comes from edges
$e$ in the digraph which correspond to edges in the stars $T_i$ in the
underlying undirected graph on $B$: there, we have
$\E{\mathbf{1}_{e}}=\frac{1}{2}$, while all other edges in $D[B]$ give the
regular $\E{\mathbf{1}_{e}}=\frac{1}{4}$.  Note that the total number of
edges in the stars induced by the sets $T_{i}$ is at least
$\frac{|B|-|U|}{2}\ge\frac{(n-\gamma n)-(\tau+\varepsilon n)}{2}$.
Therefore, 
\begin{equation}
  \E{Y_{1}} 
  \ge 
  e(A_{1},A_{2})+\frac{e(A_{1},B)
  +e(B,A_{2})}{2}+\frac{e(B)}{4}
  +\frac{1}{4}\cdot\frac{(n-\gamma n)-(\tau+\varepsilon n)}{2}.
  \label{ineq:random_split_gain}
\end{equation}
Similarly, we have
\[
  \E{Y_{2}}
  \ge 
  e(A_{2},A_{1})+\frac{e(B,A_{1})+e(A_{2},B)}{2}
  +\frac{e(B)}{4}+\frac{n-\tau}{8}
  -\frac{(\varepsilon + \gamma) n}{8}.
\]

For each $1\leq i\leq s$, let $C_{i}$ be the sum of the degrees of all
vertices in $T_{i}$. Clearly, flipping the assignment of $v_{i}$ cannot
affect $Y_{1}$ by more than $C_{i}$. Also, flipping the assignment of any
$w\in U$ cannot change $Y_{1}$ by more than the degree $d(w)$ of $w$.
Therefore, if we define 
\[
  L
  =
  \sum_{i=1}^{s}\left(\sum_{u\in T_{i}}d(u)\right)^{2}
  +\sum_{w\in U}d(w)^{2}\,,
\]
the Hoeffding-Azuma inequality (Theorem \ref{thm:azuma}) gives 
\begin{equation}
  \pr{Y_{1}\leq\E{Y_{1}}-\frac{\varepsilon n}{2}}\leq2\exp\left\{ -\frac{\varepsilon^{2}n^{2}}{8L}\right\} \,.\label{ineq:Y1-with-L}
\end{equation}
Let us now control $L$. Each $T_{i}$ induces a star with apex $v_{i}\in
e_{i}$.  Let $u_{i}$ be the unique non-apex vertex with degree greater than
$\frac{2C}{\varepsilon}$ (if no such vertex exists, then let $u_{i}$ be an
arbitrary non-apex vertex). Since every vertex in $T_{i}$ other than
$v_{i}$ and $u_{i}$ has degree at most $\frac{2C}{\varepsilon}$ in the full
$D$, we see that 
\[
  \sum_{u\in T_{i}}d(u)
  \leq 
  d(v_{i})+d(u_{i})+(d(v_{i})-1)\cdot\frac{2C}{\varepsilon}
  \leq
  (d(u_{i})+d(v_{i}))\frac{4C}{\varepsilon}\,,
\]
and hence
\begin{align*}
L & \leq\frac{16C^{2}}{\varepsilon^{2}}\sum_{i=1}^{s}(d(u_{i})+d(v_{i}))^{2}+\sum_{w\in U}d(w)^{2}\\
 & \leq\frac{32C^{2}}{\varepsilon^{2}}\sum_{i=1}^{s}(d(u_{i})^{2}+d(v_{i})^{2})+\sum_{w\in U}d(w)^{2}\\
 & \leq\frac{32C^{2}}{\varepsilon^{2}}\sum_{v\in B}d(v)^{2}
 \le\frac{32C^{2}}{\varepsilon^{2}}(\gamma n)\sum_{v\in B}d(v)\\
 & \leq\frac{32C^{2}}{\varepsilon^{2}}(\gamma n)(2Cn)\,,
\end{align*}
where we used that $d(v)\leq\gamma n$ for all $v\in B$, and the degree sum of $D$ is at
most $2Cn$. Therefore, we choose $\gamma=\frac{\varepsilon^{4}}{1024C^{3}}$,
so that $L\leq\frac{\varepsilon^{2}n^{2}}{16}$. Substituting this into
\eqref{ineq:Y1-with-L}, we conclude that 
\[
\pr{Y_{1}\leq\E{Y_{1}}-\frac{\varepsilon n}{2}}\leq2e^{-2}<\frac{1}{2}\,,
\]
as desired. By symmetry, we have $\P{Y_{2}\le\E{Y_{2}}-\frac{\varepsilon
n}{2}}<\frac{1}{2}$ as well.  Hence, there is a partition $V=V_{1}\cup
V_{2}$ with the properties that \emph{$Y_{1}\ge\E{Y_{1}}-\frac{\varepsilon
n}{2}$} and \emph{$Y_{2}\ge\E{Y_{2}}-\frac{\varepsilon n}{2}$}, which is a
desired partition.  \hfill $\Box$

\section{Minimum outdegree two\label{sec:min-2}}
\label{sec:2}

In this section we prove the $d=2$ case of Theorem \ref{thm:main}.  Suppose
that $D=(V,E)$ is a given directed graph of minimum outdegree at least 2
with $n$ vertices and $m$ edges, and $\varepsilon$ is a given fixed
positive real number.  Our goal is to find a partition $V=V_{1}\cup V_{2}$
for which
$\min\{e(V_{1},V_{2}),e(V_{2},V_{1})\}\ge(\frac{1}{6}-\varepsilon)m$.
Throughout the proof we tacitly assume that the number of vertices $n$ is
large enough, and since $m \ge 2n$, it follows that $m$ is also large
enough.

Suppose that $m\ge1152n$. Then 
by applying Proposition \ref{prop:second_moment} with
$\varepsilon_{\ref{prop:second_moment}}=\frac{1}{12}$, we obtain a partition
$V=V_{1}\cup V_{2}$ for which 
\[
\min\{e(V_{1,}V_{2}),e(V_{2},V_{1})\}\ge\frac{m}{4}-\frac{m}{12}=\frac{m}{6}.
\]
Hence it suffices to consider the case when $m<1152n$. Since the minimum
outdegree is at least two, we see that $2n\le m<1152n.$

Let $A$ be the set of \emph{large}\/ vertices, which are defined to be the
vertices with total degree at least $n^{3/4}$, and let $B=V\setminus A$.
Note that 
\[
|A|\cdot n^{3/4}\le2m<2304n,
\]
from which it follows that $|A|\le2304n^{1/4}\le\varepsilon n$, and
$e(A)\le2304^{2}n^{1/2}\le\frac{\varepsilon m}{2}$. For sake of simplicity
we remove all the edges within $A$, and update $m$ to be the new total
number of edges in the digraph.  In terms of this new $m$, it suffices to
obtain a partition $V=V_{1}\cup V_{2}$ for which 
\[
\min\{e(V_{1,}V_{2}),e(V_{2},V_{1})\}\ge\frac{m}{6}-\frac{\varepsilon m}{2}.
\]
Indeed, this will be a desired partition even after recovering the removed
edges, since at most $\frac{\varepsilon m}{2}$ edges were removed from
within $A$.  So, for the remainder of this section, we focus on the case
when $A$ induces no edges, and we let $m_{A}=e(A,B)$ and $m_{B}=e(B)$.
Note that now $m=m_{A}+m_{B}$.

\subsection{Partition of large vertices}
\label{sec:2;gap}

Given a partition $A=A_{1}\cup A_{2}$, define
\begin{equation}
  \Theta=(e(A_{1}, B)+e(B, A_{2})\big)-\big(e(B, A_{1})+e(A_{2}, B)\big).
  \label{eq:2;gap-def}
\end{equation}
We call $\Theta$ the \emph{gap}\/ of the partition.  Consider the following
greedy algorithm to partition $A$.  Since we now assume that $A$ induces no
edges, each vertex $v \in A$ contributes one of $\pm (d^+(v) - d^-(v))$ to
the expression in \eqref{eq:2;gap-def}, and thus each contribution is
bounded in magnitude by $n$.  Process the vertices of $A$ in an arbitrary
sequential order $v_1, v_2, \ldots$, and when assigning $v_i$ to a side,
choose the side which makes the sign of $v_i$'s contribution to
\eqref{eq:2;gap-def} opposite to the sign of the cumulative contribution of
all previous $v_1, \ldots, v_{i-1}$ thus far.  Since each contribution is
bounded in magnitude by $n$, the final gap $\Theta$ of this greedy
partition will also be bounded in magnitude by $n$.  Now, let $A_1 \cup
A_2$ be a partition of $A$ which minimizes $|\Theta|$, and without loss of
generality, assume that $\Theta \geq 0$.  The greedy partition provides the
upper bound $\Theta \leq n$.

Since $\max_{v\in B}d(v)\le n^{3/4}\le\frac{\varepsilon^2}{16}m$, 
by Lemma \ref{lem:partition_secondmoment} with $p=\frac{1}{2}$
and $\varepsilon_{\ref{lem:partition_secondmoment}}=\frac{\varepsilon}{2}$,
there exists a partition $V=V_{1}\cup V_{2}$ for which 
\begin{align*}
  \min\{e(V_{1},V_{2}),e(V_{1},V_{2})\} &
  \ge\frac{1}{2}\min\{e(A_{1},B)+e(B,A_{2}),e(B,A_{1})+e(A_{2},B)\}+\frac{1}{4}e(B)-\frac{\varepsilon}{2}m\\
 & =\frac{1}{2}\cdot\frac{m_{A}-\Theta}{2}+\frac{1}{4}m_{B}-\frac{\varepsilon}{2}m=\frac{m-\Theta}{4}-\frac{\varepsilon}{2}m.
\end{align*}
Hence if $\Theta\le\frac{m}{3}$, then we obtain a desired partition.
Thus we assume for the remainder that 
\begin{equation}
\Theta>\frac{m}{3}.\label{eq:gap_min_two}
\end{equation}

For a vertex $v\in A$, we let the \emph{in-surplus}\/ of $v$ be
$s^{-}(v)=d^{-}(v)-d^{+}(v)$, and the \emph{out-surplus}\/ of $v$ be
$s^{+}(v)=d^{+}(v)-d^{-}(v)$.  Let the \emph{surplus}\/ of $v$ be
$s(v)=\max\{s^{+}(v),s^{-}(v)\}.$  Note that the in-surplus and out-surplus
differ only in their sign, and the surplus is equal to their magnitude.
Call a vertex $v\in A$ a \emph{forward vertex}\/ if either $v\in A_{1}$ and
$s^{+}(v)>0$, or $v\in A_{2}$ and $s^{-}(v)>0$. Similarly, call a vertex
$v\in A$ a \emph{backward vertex}\/ if either $v\in A_{1}$ and
$s^{-}(v)>0$, or $v\in A_{2}$ and $s^{+}(v)>0$. Observe that $\Theta$ is
the difference between the sum of surpluses of forward vertices and the sum
of surpluses of backward vertices. Let the \emph{forward edges}\/ be the
edges out of $A_{1}$, and the edges in to $A_{2}$.  Similarly, let the
\emph{backward edges}\/ be the edges in to $A_{1}$, and the edges out of
$A_{2}$. Let $m_{A}^{f}$ and $m_{A}^{b}$ be the numbers of forward and
backward edges, respectively.

\subsection{Structure of large vertices}

Call a vertex \emph{huge}\/ if $s(v)\ge\Theta$. If there are no huge
vertices, then the greedy algorithm of the previous section will
immediately give a partition of the large vertices which has gap smaller
than $\Theta$, contradicting the minimality of $\Theta$. Hence there exists
at least one huge vertex. Suppose that the vertex $v_{0}$ of largest
surplus has surplus $\Delta$.  By our analysis of the greedy partition of
$A$, we must have $\Theta \leq \Delta$.  Yet if the sum of the surpluses of
the remaining vertices of $A$ is at least $\Delta+\Theta$, then the total
number of edges is at least 
\[
m\ge\Delta+(\Delta+\Theta)\ge3\Theta>m
\]
by \eqref{eq:gap_min_two}, and this is a contradiction. Hence the sum $g$
of the surpluses of the remaining vertices of $A$ is strictly less than
$\Delta+\Theta$.  Consider the partition of $A$ which puts $v_0$ in $A_1$,
and places all other vertices of $A$ such that their surplus contributes
oppositely to the surplus of $v_0$.  The gap of the resulting partition
would have magnitude $|\Delta - g|$, and therefore, the above observation,
together with the minimality of $\Theta$, implies that $g \leq
\Delta-\Theta$.  Yet our minimal partition achieves a gap of exactly
$\Theta$, and therefore it must have a single forward vertex of surplus
$\Delta$, and all the other large vertices of positive surpluses must be
backward vertices with surpluses summing to exactly $\Delta-\Theta$. 

Note that the edges contributing to $\sum_{v\in A}\left(d(v)-s(v)\right)$
come in pairs of in-edges and out-edges. Call these the \emph{buffer
edges}\/, and let $2b=\sum_{v\in A}(d(v)-s(v))$. The observation above
implies that $m_{A}^{f}=\Delta+b$ and $m_{A}^{b}=\Delta-\Theta+b$.
Moreover, since the graph has minimum outdegree at least two, and
there are at least $b$ buffer edges directed out of $A$, it also
implies that the total number of edges in $D$ is at least 
\begin{equation}
  m 
  \ge 
  b+2|B|
  \ge
  b + 2n -2\varepsilon n.
  \label{eq:num_edges_min_two}
\end{equation}
Note that we in fact have 
\[ m \ge b + 2|B| + \sum_{v\in A} \max\{s^{+}(v), 0\}, \]
and thus the first inequality in (\ref{eq:num_edges_min_two}) is tight only if
all vertices in $A$ have in-surplus.

\subsection{Obtaining a large partition}
\label{sec:2;finish}

By the given condition, we know that all the vertices in $B$ have at least
two out-edges incident to them. At most one out-edge of each vertex can be
incident to $v_{0}$ (the vertex of largest surplus), and there are at most
$\Delta-\Theta+b$ edges directed into $A$ which are not incident to
$v_{0}$. Therefore, the induced digraph on $B$ has at most
$\Delta-\Theta+b$ isolated vertices.  Since all the other odd components of
$D[B]$ have size at least three, this implies that the number of odd
components is at most 
\[
\tau\le(\Delta-\Theta+b)+\frac{|B|-(\Delta-\Theta+b)}{3}\le\frac{n+2(\Delta-\Theta+b)}{3}.
\]
Let $\gamma$ be the constant from Theorem \ref{thm:random_bisection} where
$C=1152$ and
$\varepsilon_{\ref{thm:random_bisection}}=\frac{\varepsilon}{4}$.  Since
$|A| \leq 2304 n^{1/4} \leq \gamma n$ and the maximum degree of vertices in
$B$ is at most $n^{3/4}\le\gamma n$, by Theorem \ref{thm:random_bisection},
we obtain a partition $V=V_{1}\cup V_{2}$ for which 
\begin{align*}
\min\{e(V_{1},V_{2}),e(V_{2},V_{1})\} & \ge\frac{m_{A}-\Theta}{4}+\frac{m_{B}}{4}+\frac{n-\tau}{8}-\frac{\varepsilon}{4}n\\
 & \ge\frac{m}{4}-\frac{\Theta}{4}+\frac{n-(\Delta-\Theta+b)}{12}-\frac{\varepsilon}{4}n\\
 & =\frac{m}{6}+\frac{m+n-\Delta-2\Theta-b}{12}-\frac{\varepsilon}{4}n.
\end{align*}
By \eqref{eq:num_edges_min_two} and $\Theta\le\Delta\le n$,
we see that 
\[
m+n-\Delta-2\Theta-b\ge\left(b+2n-2\varepsilon n\right)+n-3n-b=-2\varepsilon n,
\]
from which our conclusion follows. Note that the inequality has the
least amount of slackness when all large vertices have in-surplus; see the
remark following (\ref{eq:num_edges_min_two}). This observation fits
well with the construction given in Section \ref{sec:Introduction}, where
there is a single large vertex having huge in-surplus.
This completes the proof of the minimum
outdegree $d=2$ case of Theorem \ref{thm:main}.

\section{Minimum outdegree three\label{sec:min-3}}
\label{sec:3}

The $d=3$ case of Theorem \ref{thm:main} is more complicated, and we
provide its proof in this section.  Suppose that $D=(V,E)$ is a given
directed graph of minimum outdegree at least 3 with $n$ vertices and $m$
edges, and $\varepsilon$ is a given positive real number. 
Our goal is to find a partition $V=V_{1}\cup V_{2}$ for which
$\min\{e(V_{1},V_{2}),e(V_{2},V_{1})\}\ge(\frac{1}{5}-\varepsilon)m$.
Throughout the proof we tacitly assume that the number of vertices $n$ is
large enough.

Suppose that $m\ge3200n$. Then by applying 
Proposition \ref{prop:second_moment} with $\varepsilon_{\ref{prop:second_moment}}=\frac{1}{20}$, 
we obtain a partition $V=V_{1}\cup V_{2}$ for which 
\[
\min\{e(V_{1,}V_{2}),e(V_{2},V_{1})\}\ge\frac{m}{4}-\frac{m}{20}=\frac{m}{5}.
\]
Hence it suffices to consider the case $m<3200n$. Since the minimum
degree of $D$ is at least three, we see that $3n\le m<3200n$.

Let $A$ be the set of \emph{large} vertices, which are defined as
vertices that have total degree at least $n^{3/4}$, and let $B=V\setminus A$.
Note that 
\[
|A|\cdot n^{3/4}\le2m<6400n,
\]
from which it follows that $|A|\le6400n^{1/4}\le\varepsilon n$ and
$e(A)\le(6400)^{2}n^{1/2}\le\frac{\varepsilon m}{2}$. By the same argument
which we used at the beginning of Section \ref{sec:2}, it suffices to
consider the case when $A$ induces no edges, and seek a partition
$V=V_{1}\cup V_{2}$ for which 
\[
\min\{e(V_{1,}V_{2}),e(V_{2},V_{1})\}\ge\frac{m}{5}-\frac{\varepsilon
m}{2}.
\]
Let $m_{A}=e(A,B)$ and $m_{B}=e(B)$, and note that $m=m_{A}+m_{B}$.

\subsection{Partition of large vertices}

This section follows essentially the same argument as Section
\ref{sec:2;gap}, but the proofs deviate in the next section.  As before, if
we define the gap $\Theta$ of a partition $A = A_1 \cup A_2$ to be
\[
\Theta=\big(e(A_{1},B)+e(B,A_{2})\big)-\big(e(B,A_{1})+e(A_{2},B)\big) ,
\]
we may take a partition which minimizes the magnitude of the gap, with $0
\leq \Theta \leq n$.  (The upper bound is provided by the greedy
partition.)

Since $\max_{v\in B}d(v)\le n^{3/4}\le\frac{\varepsilon^2}{16}m$, 
by Lemma \ref{lem:partition_secondmoment} with $p=\frac{1}{2}$
and $\varepsilon_{\ref{lem:partition_secondmoment}}=\frac{\varepsilon}{2}$,
there exists a partition $V=V_{1}\cup V_{2}$ such that
\[
\min\{e(V_{1},V_{2}),e(V_{2},V_{1})\}\ge\frac{m-\Theta}{4}-\frac{\varepsilon m}{2},
\]
and hence if $\Theta\le\frac{m}{5}$, then this partition already
gives a desired partition. Thus we may assume that 
\begin{equation}
\Theta>\frac{m}{5}\ge\frac{3n}{5}.\label{eq:gap_min_three}
\end{equation}
For the remainder of our proof, we will re-use the terms \emph{in-surplus},
\emph{out-surplus}, \emph{surplus}, \emph{forward vertex}, \emph{backward
vertex}, \emph{forward edge}, and \emph{backward edge} as originally
defined in Section \ref{sec:2;gap}.  Let $m_{A}^{f}$ and $m_{A}^{b}$ be the
numbers of forward and backward edges, respectively.

\subsection{Structure of large vertices}

Call a vertex $v\in A$ a \emph{huge vertex} if $s(v)\ge\Theta$.  Such
vertices exist for the same reason as in the $d=2$ case.
\begin{lem}
  \label{lem:large_structure}
  There exist at least one and at most four huge vertices, and the sum of
  the surpluses of the rest of the large vertices is at most $n-\Theta$.
\end{lem}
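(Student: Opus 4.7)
The plan is to verify the three claims in order. Let the huge vertices be $v_1,\ldots,v_k$ with surpluses $H_1\geq\cdots\geq H_k \geq \Theta$, and let $g$ denote the sum of surpluses of the remaining large vertices, whose individual surpluses $N_1,\ldots,N_\ell$ are each strictly less than $\Theta$. \emph{For existence of a huge vertex,} if every $v\in A$ satisfied $s(v)<\Theta$, then processing the vertices of $A$ in any order and greedily assigning each to the side that reduces the magnitude of the running gap would yield, by an easy induction, a partition with gap magnitude at most $\max_{v\in A} s(v) < \Theta$, contradicting the minimality of $\Theta$. \emph{For at most four,} since $A$ induces no edges, $\sum_{v\in A} s(v) \leq \sum_{v\in A} d(v) = e(A,B)+e(B,A) \leq m$; combined with $\Theta > m/5$ from \eqref{eq:gap_min_three} and the fact that each huge surplus is at least $\Theta$, the number of huge vertices is strictly less than $5$.

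The main content is the bound $g \leq n - \Theta$. Define $G := \min_{\epsilon \in \{\pm 1\}^k}\bigl|\sum_{i} \epsilon_i H_i\bigr|$ and fix a minimizer $\epsilon^*$; after a global sign flip if necessary, assume $\sigma^* := \sum_i \epsilon_i^* H_i = G \geq 0$. I extract two consequences of the minimality of $\Theta$. First, $g < G$: otherwise $-G \in [-g,g]$, and a standard greedy subset-sum argument (process the $N_j$ sequentially, each time choosing the sign that moves the partial sum toward the target $-G$) yields signs $\delta_j \in \{\pm 1\}$ satisfying $\bigl|\sum_j \delta_j N_j + G\bigr| < \max_j N_j < \Theta$; the partition of $A$ corresponding to $\epsilon^*$ on huge and $\delta$ on non-huge then has gap magnitude strictly less than $\Theta$, contradicting minimality. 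Second, $G - g \geq \Theta$: apply minimality to the partition where huge vertices are assigned via $\epsilon^*$ (contributing $\sigma^* = G$) and every non-huge vertex is placed on its backward side (each contributing $-N_j$, totaling $-g$); the resulting gap equals $G - g$, positive by the first consequence, so minimality forces $G - g \geq \Theta$. Combining, $g \leq G - \Theta$. Finally, $G \leq H_1$ because greedy sign-assignment on $H_1,\ldots,H_k$ starting with $\epsilon_1 = +1$ keeps all partial sums in $[0, H_1]$, and $H_1 \leq n-1$ because a surplus is bounded by the maximum of in- and out-degree. Therefore $g \leq H_1 - \Theta \leq n - \Theta$, as required.

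The main obstacle is the third claim, where the minimality of $\Theta$ must be leveraged twice through carefully chosen alternative partitions, and where the greedy subset-sum step crucially relies on each non-huge surplus being strictly less than $\Theta$. The first two claims are essentially one-line arguments once the right linear bound on total surplus is noted.
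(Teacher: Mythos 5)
Your proof is correct. The first two claims coincide with the paper's own argument: at most four huge vertices follows from $\Theta>m/5$ together with $\sum_{v\in A}s(v)\le m$, and existence follows from the greedy bound on the gap. For the main bound $g\le n-\Theta$ you take a genuinely different (though closely related) route. The paper first shows, by flipping a single forward vertex and invoking minimality in the form $|\Theta-2s(v)|\ge\Theta$, that every forward vertex of the gap-minimizing partition is huge; it then walks from that partition through a sequence of intermediate partitions (move one huge forward vertex across, then flip the non-huge backward vertices one at a time) and uses an intermediate-value argument to land the gap strictly inside $(-\Theta,\Theta)$ whenever $g>\Delta-\Theta$. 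You instead bypass the structure of the optimal partition entirely: you introduce the balanced signing $G$ of the huge surpluses, extract $g<G$ and $G-g\ge\Theta$ from two explicitly constructed competitor partitions, and finish with $G\le H_1\le n$. Both arguments rest on the same two ingredients (greedy sign-balancing and minimality of $\Theta$), but yours needs only static competitors rather than a path of switches, while the paper's version has the side benefit of exposing the forward/backward structure of the optimal partition, which mirrors how that partition is exploited in the subsequent sections. Two cosmetic imprecisions, neither affecting correctness: the greedy subset-sum guarantee is $\le\max_j N_j$ rather than $<\max_j N_j$ (still $<\Theta$, since non-huge surpluses are strictly below $\Theta$), and the partial sums in your greedy signing of $H_1,\dots,H_k$ lie in $[-H_1,H_1]$ rather than $[0,H_1]$ (still yielding $G\le H_1$).
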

\begin{proof}
We have $\Theta>\frac{m}{5}$ by \eqref{eq:gap_min_three}, so it immediately
follows that there are at most four huge vertices.  By minimality of the
gap, if we switch the side of a forward vertex $v$, then we obtain a
partition whose gap is $\Theta-2s(v)$. Since we started with a partition
which minimized the absolute value of the gap, we must have $|\Theta -
2s(v)| \geq \Theta$.  Since the surplus $s(v)$ is always nonnegative, this
forces $\Theta - 2s(v) \leq -\Theta$, or equivalently, $s(v)\ge\Theta$.
Thus, all forward vertices have surplus at least $\Theta$, and are actually
huge. Moreover, since we chose the partition with $\Theta>0$, there are
more forward edges than backward edges, and we see that there exists at
least one forward vertex.

Now pick an arbitrary forward vertex $v$ of surplus $\Delta\ge\Theta$.
If we move $v$ to the other side, then the number of forward edges
becomes $m_{A}^{f}-\Delta$ and the number of backward edges becomes
$m_{A}^{b}+\Delta$. We have 
\[
m_{A}^{f}-\Delta\le m_{A}^{f}-\Theta = m_{A}^{b}
\quad \text{and} \quad
m_{A}^{f}=  m_{A}^{b}+\Theta \le m_{A}^{b}+\Delta.
\]
Since all forward vertices are huge, all the large vertices which are not huge are backward vertices.  If
the sum of the surpluses of these vertices is greater than $\Delta-\Theta$,
then since each of these vertices has surplus less than $\Theta$, by
choosing one such vertex at a time and switching its sides, we will
eventually reach a partition in which the number of forward edges is
greater than $m_{A}^{f}-\Delta+(\Delta-\Theta)=m_{A}^{f}-\Theta=m_{A}^{b}$
and less than $m_{A}^{f}-\Delta+\Theta \le m_{A}^{f}$. This contradicts
the minimality of the gap. Therefore the sum of the surpluses of the large
vertices that are not huge is at most $\Delta-\Theta\le n-\Theta$.
\end{proof}

To reduce the number of cases, our next step is to further restrict the
number of huge vertices.
\begin{lem}
  \label{lem:huge}
  The number of huge vertices is either one or three.
\end{lem}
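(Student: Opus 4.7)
The plan is to rule out the possibility of exactly two or exactly four huge vertices by case-splitting on the pair $(f,b)$, where $f$ is the number of huge forward vertices and $b$ the number of huge backward vertices. Recall that every forward vertex is huge and at least one forward vertex exists (since $\Theta > 0$), so $f \geq 1$. For each case I will use four ingredients: (i) the gap identity $\Theta = F - B_h - B_s$, where $F$ and $B_h$ are the total surpluses of forward vertices and of huge backward vertices, and $B_s$ is the surplus sum of the non-huge backward vertices; (ii) $B_s \leq n - \Theta$ from Lemma~\ref{lem:large_structure}; (iii) the pointwise bound $s(v) \leq n - 1$, which combined with $\Theta > 3n/5$ gives $s(v) < 2\Theta$ for every vertex $v$; and (iv) the minimality of $|\Theta|$, which says that for any $S \subseteq A$, swapping the sides of the vertices in $S$ yields a partition of gap $\Theta'$ with $|\Theta'| \geq \Theta$, so the change $\Theta' - \Theta$ lies in $(-\infty, -2\Theta] \cup [0, \infty)$.

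For most splits, pure counting suffices. In $(f,b) = (2,0)$ and $(4,0)$, the gap identity becomes $F = \Theta + B_s \leq n$, while $F$ is a sum of $f$ huge surpluses, so $F \geq f\Theta \geq 2\Theta > n$, a contradiction. In $(1,1)$ and $(1,3)$, the identity yields $F = \Theta + B_h + B_s \geq (1+b)\Theta \geq 2\Theta$, yet now $F$ is the surplus of a single vertex, hence $F \leq n - 1 < 2\Theta$, a contradiction. Finally, the splits with $f = 0$ are excluded immediately because $f \geq 1$.

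The remaining cases require swap arguments. For $(3,1)$, order the forward huge surpluses $a_1 \geq a_2 \geq a_3$ and let $c_1$ denote the backward huge surplus. Flipping the three-element set $\{v_2, v_3, u_1\}$ changes the gap by $-2(a_2 + a_3 - c_1)$, so by minimality either $c_1 \geq a_2 + a_3 \geq 2\Theta$ (impossible since $c_1 < 2\Theta$) or $a_2 + a_3 - c_1 \geq \Theta$. Substituting the latter into the gap identity gives $a_1 \leq B_s \leq n - \Theta < \Theta$, contradicting $a_1 \geq \Theta$. For $(2,2)$, label the forward huge surpluses $a_1 \geq a_2$ and the backward huge surpluses $c_1 \geq c_2$. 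The pairwise swap $\{v_1, u_i\}$ changes the gap by $-2(a_1 - c_i)$, so minimality forces either $c_i \geq a_1$ or $a_1 \geq c_i + \Theta \geq 2\Theta$; the second alternative is impossible, so $c_1 \geq a_1$ and $c_2 \geq a_1$. Therefore $c_1 + c_2 \geq 2 a_1 \geq a_1 + a_2$, i.e., $B_h \geq F$, and the gap identity then forces $\Theta \leq -B_s \leq 0$, a contradiction.

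The main obstacle is the $(2,2)$ case: there the total surplus $F + B_h$ of the four huge vertices can be as large as $4(n-1)$, so the gap identity by itself is not sharp enough, and one must select the right pair of swaps to pin down the ordering between the $a_i$ and the $c_i$.
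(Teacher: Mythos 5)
Your proof is correct, and I checked each of the six subcases: the counting cases $(2,0)$, $(4,0)$, $(1,1)$, $(1,3)$ all close via the gap identity together with $s(v)\le n-1<2\Theta$ and $B_s\le n-\Theta<\Theta$, and the swap arguments for $(3,1)$ and $(2,2)$ correctly invoke the dichotomy $\Theta'-\Theta\in(-\infty,-2\Theta]\cup[0,\infty)$ forced by minimality of $|\Theta|$. However, your route is organized quite differently from the paper's. The paper does not condition on how the huge vertices are distributed between forward and backward in the optimal partition at all: for exactly two huge vertices with surpluses $\Delta_1\ge\Delta_2$ it simply exhibits the re-partition in which $v_1$ is the unique forward vertex and every other large vertex is backward, whose gap is $\Delta_1-(\Delta_2+g)$; this is at most $\Delta_1-\Delta_2\le n-\Theta$ from above (using $\Delta_1\le n$, $\Delta_2\ge\Theta$) and at least $-g\ge-(n-\Theta)$ from below, so its magnitude is below $\Theta$, contradicting minimality in one stroke. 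For four huge vertices it makes $v_1,v_3$ forward and everything else backward and bounds the gap $(\Delta_1+\Delta_3)-(\Delta_2+\Delta_4+g)$ the same way. What the paper's construction buys is uniformity: one explicit witness partition per parity, no case split on $(f,b)$, and no need to invoke the fact that all forward vertices are huge. What your version buys is that it stays local --- every step is a perturbation of the actual optimal partition --- at the cost of the $(3,1)$ and $(2,2)$ cases, where you must choose the right swap set; your own remark that $(2,2)$ is the bottleneck is accurate, and the pair of swaps $\{v_1,u_1\}$, $\{v_1,u_2\}$ you use there does the job. Both proofs rest on the same three pillars (Lemma \ref{lem:large_structure}, minimality of $|\Theta|$, and $\Theta>\frac{3n}{5}$), so this is a legitimate alternative rather than a fundamentally new idea, but it is not the argument in the paper.
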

\begin{proof}
By Lemma \ref{lem:large_structure}, we already know that the number of huge
vertices is between one and four inclusive.  Let $g$ be the sum of the
surpluses of the large vertices that are not huge.  By Lemma
\ref{lem:large_structure}, we have $g\le n-\Theta$.  Suppose that there are
two huge vertices $v_{1}$ and $v_{2}$, which have surpluses $\Delta_{1}$
and $\Delta_{2}$, respectively, where $\Delta_{1}\ge\Delta_{2}$.
Re-partition $A$ so that $v_{1}$ is the only forward vertex, and all the
other vertices are backward vertices.  Then the gap of this partition of
$A$ is $\Delta_{1}-(\Delta_{2}+g)$. However, $\Delta_{1}-(\Delta_{2}+g)\le n-(\Theta+0)$ 
and $(\Delta_{2}+g)-\Delta_{1}\le g\le n-\Theta$, and thus the magnitude 
of the gap of the new partition is at most $n-\Theta$, which is less than $\Theta$
by \eqref{eq:gap_min_three}. This is a contradiction.

Now suppose that there are four huge vertices $v_{1},v_{2},v_{3},v_{4}$,
which have surpluses $\Delta_{1}$, $\Delta_{2}$, $\Delta_{3}$,
and $\Delta_{4}$, respectively, where $\Delta_{1}\ge\Delta_{2}\ge\Delta_{3}\ge\Delta_{4}$.
Re-partition $A$ so that $v_{1}$ and $v_{3}$ are forward vertices,
and all other vertices are backward vertices. Then the gap of this
partition is $(\Delta_{1}+\Delta_{3})-(\Delta_{2}+\Delta_{4}+g)$. 
Since 
\[
(\Delta_{1}+\Delta_{3})-(\Delta_{2}+\Delta_{4}+g)\le\Delta_{1}-\Delta_{4}\le n-\Theta<\Theta
\]
and
\[
(\Delta_{2}+\Delta_{4}+g)-(\Delta_{1}+\Delta_{3})\le g\le n-\Theta<\Theta,
\]
this again gives a contradiction. Hence we either have one or three
huge vertices. 
\end{proof}

\subsection{One huge vertex}

This section completes the proof of our main theorem for $d=3$ when there
is only one huge vertex.  The final case with three huge vertices is
finished in the next section.  So, for this section, let $v_{0}$ be the
huge vertex and let $\Delta=s(v_{0})$. Since the sum of surpluses of all
the other large vertices is at most $n-\Theta$ (by Lemma
\ref{lem:large_structure}) and $n-\Theta < \Theta \leq \Delta$ (by
\eqref{eq:gap_min_three} and the greedy partition), we see that $v_{0}$ is
the unique forward vertex, and all the other large vertices are backward
vertices.  

Recall that the edges contributing to $\sum_{v\in A}\left(d(v)-s(v)\right)$
come in pairs of in-edges and out-edges. Call these the \emph{buffer
edges}, and let $2b=\sum_{v\in A}(d(v)-s(v))$. The observation above
implies that $m_{A}^{f}=\Delta+b$ and $m_{A}^{b}=\Delta-\Theta+b$.
Moreover, since the graph has minimum outdegree at least three, and there
are at least $b$ buffer edges directed into $B$, it also implies that the
number of edges in $D$ is at least 
\begin{equation}
  m
  \ge 
  b+3|B|
  \ge
  b+3n-3\varepsilon n.
  \label{eq:num_edges_min_three}
\end{equation}

In the $d=2$ case, at this point we applied Theorem
\ref{thm:random_bisection}, with a simple bound on the number of odd
components in $D[B]$.
There, it was sufficient to control the number of odd
components with only one vertex, which was easy because 1-vertex components
have extremely simple structure (they consist of a single vertex, inducing
no edges).  For the $d=3$ case, it turns out that we must also control the
number of 3-vertex odd components.  This would still be particularly easy
in the case of oriented graphs, where each pair of vertices spans at most
one edge, but an additional twist is required to handle the general case of
directed graphs, where edges can go in both directions between the same
pair of vertices.

Nevertheless, oriented graphs are still the extremal case, because there is
an additional way to gain from pairs of edges running in opposite
directions (which we call pairs of \emph{antiparallel}\/ edges).  We
strengthen Theorem \ref{thm:random_bisection} to take advantage of this
phenomenon.  Recall that 3-vertex tight components are undirected graphs
with the property that for every one of the 3 vertices, the remaining two
vertices form an edge, and the first vertex is either adjacent to both or
none of the other two.  A moment's inspection reveals that 3-vertex tight
components are undirected $K_3$'s.  This is particularly useful.

\begin{lem}
  \label{lem:3;random_bisection}
  Given any real constants $C,\varepsilon>0$, there exist $\gamma,n_{0}>0$
  for which the following holds. Let $D=(V,E)$ be a given directed graph
  with $n\geq n_{0}$ vertices and at most $Cn$ edges, and let $A\subset V$
  be a set of at most $\gamma n$ vertices which have already been
  partitioned into $A_{1}\cup A_{2}$.  Let $B=V\setminus A$, and suppose
  that every vertex in $B$ has degree at most $\gamma n$ (with respect to
  the full $D$). Let $\tau'$ be the number of \textbf{tight} components in
  the underlying undirected (simple) graph induced by $B$, \textbf{not
    counting the 3-vertex components which contain edges that lift to
    antiparallel pairs in $\boldsymbol{D}$}.  Then, there is a bipartition
    $V=V_{1}\cup V_{2}$ with $A_{1}\subset V_{1}$ and $A_{2}\subset V_{2}$,
    such that both 
  \begin{align*}
    e(V_{1},V_{2}) & \ge
    e(A_{1},A_{2})+\frac{e(A_{1},B)+e(B,A_{2})}{2}+\frac{e(B)}{4}+\frac{n-\tau'}{8}-\varepsilon n\\
    e(V_{2},V_{1}) & \ge
    e(A_{2},A_{1})+\frac{e(B,A_{1})+e(A_{2},B)}{2}+\frac{e(B)}{4}+\frac{n-\tau'}{8}-\varepsilon n\,.
  \end{align*}
\end{lem}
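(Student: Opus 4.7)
\medskip
The plan is to emulate the proof of Theorem~\ref{thm:random_bisection}, modifying only the star-decomposition step to absorb the non-free vertex of each 3-vertex tight component containing an antiparallel pair into a 3-vertex \emph{triangle gadget}, rather than leaving it in the leftover set $U$. First, apply Lemma~\ref{lem:star_decompose} to the underlying undirected simple graph on $B$, yielding $B = T_1 \cup \cdots \cup T_s \cup U$. Using Lemma~\ref{lem:freeandtight}, the $U$-vertices consist (apart from at most $\varepsilon n$ high-degree vertices) of the unique non-free $W$-vertices of the tight components. For each 3-vertex tight component $\{u,v,w\}$ that carries an antiparallel pair in $D$ (with $uv$ the matching edge, $w$ the non-free $W$-vertex, and $w$ of $D$-degree at most $2C/\varepsilon$ --- which holds for all but at most $\varepsilon n$ vertices), remove $w$ from $U$ and enlarge the 2-vertex star $\{u,v\}$ into the triangle gadget $\{u,v,w\}$. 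The refined decomposition $B = T'_1 \cup \cdots \cup T'_{s'} \cup U'$ then satisfies $|U'| \leq \tau' + O(\varepsilon) n$.

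Next, randomize. For each ordinary star gadget, place its apex on a uniformly random side with the remaining vertices opposite, exactly as in Theorem~\ref{thm:random_bisection}. For each triangle gadget $\{u,v,w\}$, designate as the \emph{triangle apex} $x$ the vertex such that the two undirected edges of the triangle incident to $x$ carry the largest total number of directed edges in $D$ (ties broken arbitrarily); place $x$ on a uniformly random side with the other two vertices opposite. Writing $k_{ij}$ for the number of directed edges of $D$ between $i$ and $j$, the contribution of a single triangle gadget to $\E{e(V_1,V_2)}$ above the random baseline of $\frac{1}{4}$ per directed edge is exactly $\frac{1}{4}(k_{xa}+k_{xb}-k_{ab})$ per direction, where $\{a,b\} = \{u,v,w\}\setminus\{x\}$. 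Since each $k_{ij}\in\{1,2\}$ and at least one equals $2$, a short case analysis shows that the apex choice always gives $k_{xa}+k_{xb}-k_{ab} \geq 2$, yielding extra gain of at least $\frac{1}{2} \geq \frac{3}{8}$ per direction per triangle. Combined with the extra gain $\geq \frac{|T'_i|}{8}$ per ordinary star gadget, summing over all gadgets yields total extra gain $\geq \frac{|B|-|U'|}{8} \geq \frac{n-\tau'}{8} - O(\varepsilon + \gamma) n$ per direction.

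Concentration proceeds by Hoeffding--Azuma (Theorem~\ref{thm:azuma}), with one independent bit per gadget apex and per $U'$-vertex. The Lipschitz constant for a triangle-gadget bit is at most $d(u)+d(v)+d(w) \leq d(u)+d(v)+\tfrac{2C}{\varepsilon}$, exceeding the bound for a 2-vertex star only by an additive $O(C/\varepsilon)$ term, which is absorbable into the same form of variance bound $L = O(C^3 \gamma n^2/\varepsilon^2)$ as in the original proof; choosing $\gamma$ slightly smaller than in Theorem~\ref{thm:random_bisection} completes the argument.

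The main obstacle is the triangle-gadget gain computation above: we must verify that the apex choice always achieves a per-direction gain of at least $\frac{1}{2}$. The case analysis itself is short (since each $k_{ij}\in\{1,2\}$ and at least one equals $2$), but the role of the antiparallel pair is essential --- without it, the ``internal'' third edge would cancel the gains from the two apex-incident edges, and moving $w$ into a triangle gadget would not improve on simply leaving it in $U$.
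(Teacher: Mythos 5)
Your argument is correct, and it reaches the same conclusion as the paper by a mechanism that differs in one interesting way. The paper keeps the decomposition of Lemma \ref{lem:star_decompose} completely intact: each 3-vertex tight component still contributes a 2-vertex star $T_i$ plus a non-free vertex left in $U$, but since such a component is a $K_3$, the matching edge $e_i$ may be re-chosen (without disturbing the maximality properties of the matching) to be the undirected edge that lifts to an antiparallel pair; that star then carries two directed edges instead of one, and the resulting extra $\tfrac{1}{4}$ per such component is exactly what converts $\tau$ into $\tau'=\tau-\sigma$ in inequality \eqref{ineq:random_split_gain}. You instead pull the non-free vertex out of $U$ and form a 3-vertex triangle gadget, which must then earn $\tfrac{3}{8}$ of expected gain on its own; your identity $\tfrac{1}{4}(k_{xa}+k_{xb}-k_{ab})$ for the per-direction gain is right, and the case check (each $k_{ij}\in\{1,2\}$ with at least one equal to $2$, and the apex chosen to minimize $k_{ab}$, forces $k_{xa}+k_{xb}-k_{ab}\ge 2$, i.e.\ gain at least $\tfrac{1}{2}$) is also right, as is the observation that the Lipschitz constant of a triangle gadget exceeds that of the underlying 2-star by only an additive $O(C/\varepsilon)$ once you restrict to absorbing only low-degree third vertices. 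The paper's version is lighter on bookkeeping --- the randomized placement, the expectation computation, and the concentration argument are literally unchanged, and only the count of directed star edges improves --- whereas yours requires re-verifying the expectation and Lipschitz bounds for a new gadget type and re-counting $|U'|$; in exchange, your version extracts a strictly larger gain per antiparallel triangle ($\tfrac{1}{2}$ versus $\tfrac{1}{4}$) and makes the role of the antiparallel pair, and why it cannot be dispensed with, more transparent.
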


\noindent \textbf{Remark.} The only differences between this statement and
Theorem \ref{thm:random_bisection} are indicated in bold.  Importantly, the
bounds in the conclusion are the same.

\begin{proof}
  Let $\tau$ be the number of tight components in the underlying undirected
  (simple) graph induced by $B$, and let $\sigma$ be the number of 3-vertex
  tight components which contain an antiparallel pair when lifted to $D$.
  The first step of the proof of Theorem \ref{thm:random_bisection} was to
  apply Lemma \ref{lem:star_decompose} to partition $B = T_1 \cup \cdots
  \cup T_s \cup U$.  By the proof of that lemma, each 3-vertex tight
  component contributes exactly one star $T_i$, with its original $e_i$
  coming from two of the vertices, and the third vertex contributing a
  non-free vertex to $U$.  (Here, we use the fact that 3-vertex tight
  components are $K_3$'s, so that we are assured that the third vertex is
  always non-free.)  Using this structural fact again, observe that
  actually, no matter which edge of the $K_3$ we use for our $e_i$ to seed
  the $T_i$, the third vertex will always be non-free.  In particular, if
  the 3-vertex tight component contains an antiparallel pair, then we may
  select the corresponding edge in the underlying undirected (simple) graph
  as the $e_i$, and the total number of non-free edges will remain the same
  as before.  This is important because in the proof of Lemma
  \ref{lem:star_decompose}, it is essential that we start with a maximal
  matching, which secondarily maximizes the number of free vertices.

  Therefore, we may assume that $\sigma$ of the stars $T_i$ contain at
  least one edge which lifts to an antiparallel pair in $D$. This implies that
  the total number of edges in the stars $T_i$ is now $\sigma$ more than
  that in Theorem \ref{thm:random_bisection}. Continuing
  the proof along the lines of Theorem \ref{thm:random_bisection}, observe
  that the gain of $+\frac{1}{4}$ comes from the edges of $D$ which
  correspond to edges of the stars $T_i$.  Therefore, we may improve
  inequality \eqref{ineq:random_split_gain} to
  \begin{align*}
    \E{Y_{1}} 
    &\ge 
    e(A_{1},A_{2})+\frac{e(A_{1},B)
    +e(B,A_{2})}{2}+\frac{e(B)}{4}
    +\frac{1}{4} \left[
      \frac{(n-\gamma n)-(\tau+\varepsilon n)}{2}
      +
      \sigma
    \right] \\
    &\ge
    e(A_{1},A_{2})+\frac{e(A_{1},B)
    +e(B,A_{2})}{2}+\frac{e(B)}{4}
    +\frac{n-\tau'}{8}
    - \frac{(\varepsilon + \gamma) n}{8} ,
  \end{align*}
  because $\tau' = \tau - \sigma$.  At this point, we have reached the same
  formula as in the proof of Theorem \ref{thm:random_bisection}, except
  that $\tau$ has been fully replaced with $\tau'$.  Therefore, the rest of
  the proof completes in the same way as before.
\end{proof}

In order to use Lemma \ref{lem:3;random_bisection}, we must now control
$\tau'$.  We do this with a similar argument to what was used in Section
\ref{sec:2;finish}. 
\begin{lem}
  \label{lem:num_odd_component}
  The number of tight components in the underlying
  undirected (simple) graph induced by $B$, not counting 3-vertex
  components which contain edges that lift to antiparallel pairs in $D$,
  satisfies:
  \begin{displaymath}
    \tau'\le\frac{n+2(\Delta-\Theta+b)}{5}.
  \end{displaymath}
\end{lem}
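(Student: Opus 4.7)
The plan is to classify the tight components of the underlying simple graph on $B$ by their size. Since every tight component has odd order, let $k_1$ count the isolated vertices (the $1$-vertex tight components), let $k_3'$ count the $3$-vertex tight components that do not contain any antiparallel pair in $D$, and let $k_{\ge 5}$ count the tight components of size at least $5$, so that $\tau'=k_1+k_3'+k_{\ge 5}$. I will combine a trivial vertex-count bound on $k_{\ge 5}$ with an edge-count bound on $2k_1+3k_3'$ coming from the minimum outdegree hypothesis.

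For the vertex count, each tight component of size at least $5$ occupies at least $5$ vertices of $B$, so $k_1+3k_3'+5k_{\ge 5}\le |B|\le n$, giving
\[
  \tau'\le k_1+k_3'+\frac{n-k_1-3k_3'}{5}=\frac{n+2(2k_1+k_3')}{5}.
\]
Hence the lemma reduces to showing $2k_1+3k_3'\le\Delta-\Theta+b$, which in particular implies $2k_1+k_3'\le\Delta-\Theta+b$.

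For the edge count, I use that $A$ induces no edges, so every out-edge of a vertex in $B$ either stays inside its own tight component or lands in $A$. An isolated vertex has minimum outdegree at least $3$, all of which go into $A$. A $3$-vertex tight component is necessarily a $K_3$ in the underlying simple graph, and when no antiparallel pair is present it lifts to exactly $3$ directed edges inside; its three vertices thus have total outdegree $3$ within the component, and the minimum outdegree condition leaves at least $9-3=6$ out-edges exiting to $A$. Since at most one directed edge from any given vertex of $B$ can hit $v_0$, at most $k_1+3k_3'$ of these out-edges land on $v_0$ itself, so at least $(3k_1+6k_3')-(k_1+3k_3')=2k_1+3k_3'$ of them land in $A\setminus\{v_0\}$.

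It therefore suffices to bound the total number of edges from $B$ into $A\setminus\{v_0\}$ by $m_A^b=\Delta-\Theta+b$. Assume $v_0\in A_1$ (the case $v_0\in A_2$ is symmetric). Every vertex of $A_2\setminus\{v_0\}$ is either a backward vertex (with positive out-surplus) or has equal in- and out-degree, so it sends at least as many out-edges to $B$ (which are backward) as it receives from $B$ (which are forward); summing over these vertices yields $e(B,A_2\setminus\{v_0\})\le e(A_2\setminus\{v_0\},B)$. Since $e(B,A_1\setminus\{v_0\})$ and $e(A_2\setminus\{v_0\},B)$ are disjoint subsets of backward edges, we conclude
\[
  e(B,A\setminus\{v_0\})\le e(B,A_1\setminus\{v_0\})+e(A_2\setminus\{v_0\},B)\le m_A^b=\Delta-\Theta+b.
\]
Combining this with the previous paragraph gives $2k_1+3k_3'\le\Delta-\Theta+b$, completing the proof. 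The main obstacle is this last in-edge bound: one must exploit the out-surpluses of the backward vertices in $A_2\setminus\{v_0\}$ in order to absorb the forward in-edges into backward out-edges, which is what makes $m_A^b$ a valid upper bound for the total number of in-edges at $A\setminus\{v_0\}$.
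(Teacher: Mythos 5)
Your proof is correct and follows essentially the same route as the paper: classify the tight components by size, use the vertex count to bound the contribution of components of order at least five, and use the minimum-outdegree-three condition to charge each isolated vertex two edges and each antiparallel-free triangle three edges into $A\setminus\{v_0\}$, whose total you bound by $m_A^b=\Delta-\Theta+b$. The only difference is that you spell out the justification of the bound $e(B,A\setminus\{v_0\})\le \Delta-\Theta+b$ (via the out-surpluses of the backward vertices in $A_2\setminus\{v_0\}$), which the paper asserts directly from the structure established earlier.
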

\begin{proof}
Let $\tau_{1}$ be the number of isolated vertices, $\tau_{3}'$ be the
number of tight components of order three, not counting those which contain
antiparallel pairs, and $\tau_{5}$ be the number of odd components of order
at least five, each in the induced subgraph on $B$.  Note that
$\tau' \leq \tau_{1}+\tau_{3}'+\tau_{5}$.  By considering the number of
vertices, we obtain the inequality
\begin{equation}
\tau_{1}+3\tau_{3}'+5\tau_{5}\le n.\label{eq:odd_component_bound1}
\end{equation}

The vertices in $B$ must have outdegree at least three in the whole graph.
Each vertex has at most one edge incident to $v_{0}$, and there are at most
$\Delta-\Theta+b$ edges from $B$ to $A$ which are not incident to $v_{0}$.
Each isolated vertex in $B$ uses at least two edges out of the
$\Delta-\Theta+b$ edges. Similarly, since a 3-vertex component counted by
$\tau_3'$ contains at most 3 edges (it cannot have antiparallel pairs), in
order to obtain such a component, we must use at least three edges out of
the $\Delta-\Theta+b$ edges, per component. Thus we obtain the inequality
\begin{align*}
2\tau_{1}+3\tau_{3}' & \le\Delta-\Theta+b.
\end{align*}
By adding two times this inequality to \eqref{eq:odd_component_bound1},
we obtain
\[
  5\tau'+4\tau_{3}' \le 5\tau_{1}+9\tau_{3}'+5\tau_{5}\le n+2(\Delta-\Theta+b).
\]
Hence 
\[
\tau'\le\frac{n+2(\Delta-\Theta+b)}{5}.
\]

\end{proof}
Let $\gamma$ be the constant from Theorem \ref{thm:random_bisection},
where $C=3200$ and $\varepsilon_{\ref{thm:random_bisection}}=\frac{\varepsilon}{4}$.
Since $m\le3200n$, $|A| \leq 6400 \varepsilon n^{1/4} \le \gamma n$, 
and $\max_{v\in B}d(v)\le n^{3/4}\le\gamma n$,
by Theorem \ref{thm:random_bisection} and Lemma \ref{lem:num_odd_component},
we obtain a bipartition $V=V_{1}\cup V_{2}$ for which 
\begin{align*}
\min\{e(V_{1},V_{2}),e(V_{2},V_{1})\} &
\ge\frac{1}{2}\min\{m_{A}^{f},m_{A}^{b}\}+\frac{1}{4}m_{B}+\frac{n-\tau'}{8}-\frac{\varepsilon}{4}n.\\
 & \ge\frac{1}{4}(m-\Theta)+\frac{n}{8}-\frac{n+2(\Delta-\Theta+b)}{40}-\frac{\varepsilon}{4}n.\\
 & =\frac{1}{4}m-\frac{1}{5}\Theta+\frac{1}{10}n-\frac{1}{20}\Delta-\frac{1}{20}b-\frac{\varepsilon}{4}n.
\end{align*}
Thus it suffices to prove that the right hand side of above is at
least $\frac{m}{5}-\frac{\varepsilon n}{2}$, or equivalently that
\begin{align*}
 & \left(\frac{1}{4}m-\frac{1}{5}\Theta+\frac{1}{10}n-\frac{1}{20}\Delta-\frac{1}{20}b-\frac{\varepsilon}{4}n\right)-\left(\frac{m}{5}-\frac{\varepsilon}{2}n\right)\\
=\, & \frac{m}{20}-\frac{1}{5}\Theta+\frac{1}{10}n-\frac{1}{20}\Delta-\frac{1}{20}b+\frac{\varepsilon}{4}n
\end{align*}
is at least zero. Recall that by \eqref{eq:num_edges_min_three},
we have $m\ge b+3n-3\varepsilon n$. By substituting this bound on
$m$ in the equation above, we get
\begin{align*}
 & \frac{(b+3n-3\varepsilon n)}{20}-\frac{1}{5}\Theta+\frac{1}{10}n-\frac{1}{20}\Delta-\frac{1}{20}b+\frac{\varepsilon}{4}n\\
=\, & \frac{1}{4}n-\frac{1}{5}\Theta-\frac{1}{20}\Delta+\frac{\varepsilon}{10}n.
\end{align*}
Since $n\ge\Delta\ge\Theta$, the right hand side is indeed at least
zero, and this proves the theorem when there is one huge vertex.

\subsection{Three huge vertices}

Let $v_{1},v_{2},v_{3}$ be the three huge vertices, and let $\Delta_{1},\Delta_{2},\Delta_{3}$
be their respective surpluses so that $\Delta_{1}\ge\Delta_{2}\ge\Delta_{3}$.
Let $g$ be the sum of surpluses of the large vertices which are not
huge. By Lemma \ref{lem:large_structure}, we know that $g\le n-\Theta$.
Recall that the edges contributing to $\sum_{v\in A}\left(d(v)-s(v)\right)$
come in pairs of in-edges and out-edges. Call these the \emph{buffer
edges}, and let $2b=\sum_{v\in A}(d(v)-s(v))$.
Note that 
\begin{equation}
m=(\Delta_{1}+\Delta_{2}+\Delta_{3}+g+2b)+m_{B}.\label{eq:min_three_num_edges_A}
\end{equation}

We re-partition $A$ as follows. First place the three vertices $v_{1},v_{2},v_{3}$
so that $v_{1}$ is a forward vertex and $v_{2},v_{3}$ are backward
vertices. Depending on the range of parameters, we will choose where
to place the large vertices that are not huge. Let $m_{A}^{f}=\Delta_{1}+b+X$
and $m_{A}^{b}=\Delta_{2}+\Delta_{3}+b+Y$, where $X+Y=g$. We will
either use the partition that gives $(X,Y)=(g,0)$, or the partition
that gives $(X,Y)=(0,g)$.  Note that such partitions always exist.

If $v_{1}$ has positive out-surplus, then let $p=\frac{2}{5}$, and
if $v_{1}$ has positive in-surplus, then let $p=\frac{3}{5}$. By
Lemma \ref{lem:partition_secondmoment} with such choice of $p$ and
$\varepsilon_{\ref{lem:partition_secondmoment}}=\frac{\varepsilon}{2}$,
we obtain a bipartition of $V$ in which 
\begin{displaymath}
  e(V_{1},V_{2}) 
  \geq 
  (1-p)e(A_{1},B)+p\cdot e(B,A_{2})+p(1-p)e(B)-\frac{\varepsilon m}{2}.
\end{displaymath}
Also, note that $e(A_1,B) + e(B,A_2) = m_A^f$, and $\{p,1-p\} =
\big\{\frac{2}{5}, \frac{3}{5} \big\}$, so $(1-p)e(A_{1},B)+p\cdot
e(B,A_{2})$ has the form $\frac{3}{5} Z + \frac{2}{5} (m_A^f - Z)$ for
some $Z$.  By how we placed the vertex $v_{1}$, we always have $Z \geq
s(v_1)$, and therefore
\begin{displaymath}
(1-p)e(A_{1},B)+p\cdot e(B,A_{2}) 
\ge\frac{3}{5}s(v_{1})+\frac{2}{5}(m_{A}^{f}-s(v_{1}))
=\frac{3}{5}\Delta_{1}+\frac{2}{5}(b+X).
\end{displaymath}
Hence
\[
e(V_{1},V_{2})\ge\frac{3}{5}\Delta_{1}+\frac{2}{5}(b+X)+\frac{6}{25}m_{B}-\frac{\varepsilon m}{2},
\]
and for 
\[
m_{1,2}=\frac{3}{5}\Delta_{1}+\frac{2}{5}(b+X)+\frac{6}{25}m_{B},
\]
it suffices to prove that $m_{1,2}\ge\frac{m}{5}$.  For $e(V_2, V_1)$, we
simply use the observation that $\min\{p, 1-p\} = \frac{2}{5}$ together
with $e(A_2,B) + e(B,A_1) = m_A^b$, and therefore Lemma
\ref{lem:partition_secondmoment} gives
\begin{displaymath}
  e(V_2, V_1)
  \geq
  \frac{2}{5} m_A^b + \frac{6}{25} m_B - \frac{\varepsilon m}{2}.
\end{displaymath}
Hence for
\begin{align*}
m_{2,1} & =\frac{2}{5}m_{A}^{b}+\frac{6}{25}m_{B}=\frac{2}{5}(\Delta_{2}+\Delta_{3}+b+Y)+\frac{6}{25}m_{B},
\end{align*}
it suffices to prove that $m_{2,1}\ge\frac{m}{5}$.

Thus our goal is to show that $m_{1,2}-\frac{m}{5}$ and $m_{2,1}-\frac{m}{5}$
are both non-negative. By \eqref{eq:min_three_num_edges_A} asserting
$m=\Delta_{1}+\Delta_{2}+\Delta_{3}+g+2b+m_{B}$, we have 
\[
m_{1,2}-\frac{m}{5}=\frac{2\Delta_{1}-\Delta_{2}-\Delta_{3}}{5}+\frac{2X-g}{5}+\frac{1}{25}m_{B}
\]
and
\begin{align*}
m_{2,1}-\frac{m}{5} & =\frac{\Delta_{2}+\Delta_{3}-\Delta_{1}}{5}+\frac{2Y-g}{5}+\frac{1}{25}m_{B}.
\end{align*}
Two cases complete the rest of this section.

\medskip

\noindent \textbf{Case 1.} $2\Delta_{1}-\Delta_{2}-\Delta_{3}-g>0$.

\medskip

We partition $A$ so that $(X,Y)=(0,g)$. The condition in this case
immediately implies that $m_{1,2}-\frac{m}{5}\ge0$, and thus it suffices to
show that $m_{2,1}-\frac{m}{5}\ge0$. Note that since
$\Delta_{2},\Delta_{3}\ge\Theta$ and $\Delta_{1}\le n$, we have 
\begin{align*}
m_{2,1}-\frac{m}{5} & =\frac{\Delta_{2}+\Delta_{3}-\Delta_{1}}{5}+\frac{2Y-g}{5}+\frac{1}{25}m_{B}\\
 & \ge\frac{2\Theta-n}{5}+\frac{g}{5}+\frac{1}{25}m_{B}.
\end{align*}
By \eqref{eq:gap_min_three} asserting $\Theta>\frac{m}{5}\ge\frac{3n}{5}$,
we have $m_{2,1}-\frac{m}{5}\ge0$. Hence we obtain a desired partition.

\medskip

\noindent \textbf{Case 2.} $2\Delta_{1}-\Delta_{2}-\Delta_{3}-g\le0$.

\medskip

We partition $A$ so that $(X,Y)=(g,0)$. We have
\[
m_{1,2}-\frac{m}{5}=\frac{2\Delta_{1}-\Delta_{2}-\Delta_{3}}{5}+\frac{g}{5}+\frac{1}{25}m_{B},
\]
and this is non-negative since $\Delta_{1}\ge\Delta_{2}\ge\Delta_{3}$.  On
the other hand, 
\[
m_{2,1}-\frac{m}{5}=\frac{\Delta_{2}+\Delta_{3}-\Delta_{1}-g}{5}+\frac{1}{25}m_{B},
\]
and since $\Delta_{1}\le\frac{\Delta_{2}+\Delta_{3}+g}{2}$ by the condition
in this case, we have
\begin{align*}
m_{2,1}-\frac{m}{5} & \ge\frac{\Delta_{2}+\Delta_{3}}{10}-\frac{3g}{10}+\frac{1}{25}m_{B}.
\end{align*}
By \eqref{eq:gap_min_three} asserting $\Theta>\frac{3n}{5}$,
we have $\Delta_{2}+\Delta_{3}\ge2\Theta>\frac{6n}{5}$ and $3g<3(n-\Theta)<\frac{6n}{5}$.
Hence 
\[
m_{2,1}-\frac{m}{5}\ge0,
\]
and we obtain a desired partition. This concludes the proof.

\section{Concluding remarks}
\label{sec:concluding}

The structure of the proof for $d=2,3$ can be described as follows.  First,
we identify the vertices $A$ which have large total degree, and consider an
optimal partition of these vertices. We then further identify the ``huge''
vertices, which are vertices whose surplus is at least as large as the gap
of the partition. It turns out that there can only be a small number of
huge vertices. Finally, we partition the set $B=V\setminus A$ depending on
the structure of the huge vertices.  For this, we used two different
probabilistic approaches.  One was through the estimate on the number of
odd components (Theorem \ref{thm:random_bisection}), and another was
through making a random unbalanced partition of $B$ (Lemma
\ref{lem:partition_secondmoment}).  However, both methods turn out to be
too limited in strength to cover the cases $d\ge4$.

To see why we needed both probabilistic techniques, consider the
orientation of $K_{3,n-3}$ where all edges are oriented from the part of
size $n-3$ to the part of size 3.  This digraph essentially has minimum
outdegree 3, with only three vertices in violation, and a constant-size
addition would give it that property without affecting its asymptotic
partition performance.  So, we would expect there to be a partition for
which $\min\{e(V_{1},V_{2}),e(V_{2},V_{1})\} \ge \frac{m}{5}+o(m)$.  Note
that the set $A$ of large vertices would then be the three vertices of
degree $n-3$, and the remainder $B$ would be the $n-3$ vertices of degree
$3$.  If we try to use only Theorem \ref{thm:random_bisection}, the
resulting bipartition will nearly be a bisection (a bipartition into two
equal size parts), since that method distributes vertices into the two
sides with equal probability.  Yet if we only consider bisections  of this
graph, then in every bisection $V=V_{1}\cup V_{2}$, we have 
\[
  \min\{e(V_{1},V_{2}),e(V_{2},V_{1})\}
  \leq
  \left( \frac{1}{2} + o(1) \right) n
  =
  \left( \frac{1}{6} + o(1) \right) m ,
\]
which is already too small.

A different example shows that even an unbalanced straightforward random
partition of $B$ (Lemma \ref{lem:partition_secondmoment}) is insufficient.
Indeed, add a 3-out-regular graph inside the larger part of the bipartite
graph above, so that $m=6(n-3)$.  By merely taking a random partition of
$B$, and assuming that $V_{1}$ contains one vertex of degree $n-3$ and
$V_{2}$ contains two vertices of degree $n-3$, we obtain a partition for
which 
\begin{align*}
e(V_{1},V_{2}) & \approx 2np+p(1-p)\cdot3n\\
e(V_{2},V_{1}) & \approx n(1-p)+p(1-p)\cdot3n.
\end{align*}
In order to maximize $\min\{e(V_{1},V_{2}),e(V_{2},V_{1})\}$, we take
$p=\frac{1}{3}$, and obtain
$\min\{e(V_{1},V_{2}),e(V_{2},V_{1})\}\approx\frac{4}{3}n\approx\frac{2}{9}m$.
Even though this graph does not quite have minimum outdegree six, only
three vertices are deficient, and so if Conjecture \ref{conj:main} is true,
we expect there to be a bipartition for which
$\min\{e(V_{1},V_{2}),e(V_{2},V_{1})\}\ge\frac{5}{22}m+o(m)$.  Hence 
Lemma \ref{lem:partition_secondmoment} is also too weak on its own.

Therefore in order to proceed further under the same framework, we must
combine the two ideas. A naive combination will fail for the following
reason. Consider the orientation of $K_{5,n-5}$ where out of the 5 vertices
on one side, one vertex $v_{1}$ has outdegree $n-5$ and the other four
vertices $v_{2}, v_{3}, v_{4}, v_{5}$ have indegree $n-5$. The set $A$ of
large vertices is precisely $\{v_1, \ldots, v_5\}$. Suppose that the
optimal partition of $A$ has $v_{2} \in A_{1}$ and
$v_{1},v_{3},v_{4},v_{5}\in A_{2}$.  (It is possible to slightly modify the
graph to ensure that this is the unique optimal partition of $A$.) No
matter how we complete this partition into a partition of the whole vertex
set, we have 
\[
e(V_{2},V_{1})=n-5=\frac{m}{5}.
\]
Since the minimum outdegree is essentially 4, we need the factor
$\frac{3}{14}$ to prove Conjecture \ref{conj:main}, and thus we fall short.
Hence our example shows that in some cases we must start with a sub-optimal
partition of $A$. Indeed, we used this idea in our proof for the case
$d=3$, but in a brute force, ad-hoc manner. It would be interesting to find
a systematic way to combine all of these ideas to resolve the general case
$d\ge4$.

\medskip

\noindent \textbf{Acknowledgment}. We would like to thank the referee for
helpful comments.

\end{document}